\newtheorem{theorem}{Theorem}
\newtheorem{corollary}{Corollary}
\newtheorem{proposition}{Proposition}
\newtheorem{lemma}{Lemma}
\newtheorem{definition}{Definition}
\newtheorem{remark}{Remark}
\newenvironment{proof}{\begin{ProofwCaption}{Proof}}{\end{ProofwCaption}}
\newenvironment{proof*}[1]{\begin{ProofwCaption}{{#1}}}{\end{ProofwCaption}}
\newenvironment{ProofwCaption}[1]%
  {\addvspace\theorempreskipamount \noindent{\it #1.}\rm}%
  {\qed \par \addvspace\theorempostskipamount}
\newcommand{\qedsymbol}{{\rm $\Box$}}
\newcommand{\qed}{\hfill\qedsymbol}
\newcommand{\CC}{{\mathbb C}}
\newcommand{\RR}{{\mathbb R}}
\newcommand{\ZZ}{{\mathbb Z}}
\newcommand{\calG}{{\mathcal G}}
\newcommand{\calB}{{\mathcal B}}
\title{Enhanced equivariant Saito duality}
\author{Wolfgang Ebeling and Sabir M.~Gusein-Zade
\thanks{Partially supported by DFG (Mercator fellowship, Eb 102/8-1), RFBR-13-01-00755
and NSh-5138.2014.1.
Keywords: group action, Burnside ring, Saito duality, orbifold zeta function, invertible polynomial, monodromy.
AMS 2010 Math. Subject Classification: 14R20, 19A22, 14J33, 57R18, 58K10.
}
}
\date{}
\begin{document}
\selectlanguage{english}

\maketitle

\begin{abstract}
In a previous paper, the authors defined an equivariant version of the so-called Saito duality between the monodromy zeta functions
as a sort of Fourier transform between the Burnside rings of
an abelian group and of its group of characters. 
Here a so-called enhanced Burnside ring $\widehat{B}(G)$ of a finite group $G$ is defined.
An element of it is represented by a finite $G$-set with a $G$-equivariant transformation
and with characters of the isotropy subgroups associated to all points.
One gives an enhanced version of the equivariant Saito duality. For a complex analytic $G$-manifold
with a $G$-equivariant transformation of it one has an enhanced equivariant Euler characteristic
with values in a completion of $\widehat{B}(G)$. It is proved that the (reduced) enhanced equivariant
Euler characteristics of the Milnor fibres of Berglund-H\"ubsch dual invertible polynomials
coincide up to sign and show that this implies the result about orbifold zeta functions of
Berglund-H\"ubsch-Henningson dual pairs obtained earlier.
\end{abstract}

\section*{Introduction}
In \cite{Ar} V.~Arnold has observed a so-called strange duality between the 14 exceptional unimodular
singularities. K.~Saito has described a duality between the monodromy zeta functions of
dual singularities \cite{Saito1, Saito2}. The Arnold strange duality can be considered as
a special case of the Berglund-H\"ubsch mirror symmetry \cite{BH1}. This symmetry includes
a duality on the set of the so-called invertible polynomials: quasihomogeneous polynomials with
the number of monomials equal to the number of variables. In \cite{BLMS} we gave an equivariant
version of the Saito duality as a sort of Fourier transform between the Burnside rings of
an abelian group and of its group of characters. The group of diagonal symmetries of an invertible
polynomial is canonically isomorphic to the group of characters of the group of diagonal
symmetries of the dual polynomial. The equivariant Saito duality is a generalization of the usual
Saito duality to all invertible polynomials.

The Berglund-H\"ubsch duality has an orbifold version \cite{BH2} which considers a pair $(f,G)$
consisting of an invertible polynomial and a subgroup $G$ of the group of diagonal symmetries
of $f$ together with a dual pair $(\widetilde{f},\widetilde{G})$. The notion of the orbifold
Euler characteristic of a $G$-set was introduced in \cite{DHVW1, DHVW2}. In \cite{MMJ} we have
shown that the (reduced) orbifold Euler characteristic of the Milnor fibres of
Berglund-H\"ubsch-Henningson dual pairs coincide up to sign. The notion of the orbifold monodromy zeta
function was essentially defined in \cite[Definition 5.10]{ET}. In \cite{PEMS} it was shown
that the (reduced) orbifold zeta functions of Berglund-H\"ubsch-Henningson dual pairs either
coincide or are inverse to each other (depending on the number of variables).

Here we define a so-called enhanced Burnside ring $\widehat{B}(G)$ of a finite group $G$
and give an enhanced version of the equivariant Saito duality. For a complex analytic $G$-manifold
with a $G$-equivariant transformation of it one has an enhanced equivariant Euler characteristic
with values in a completion of $\widehat{B}(G)$. We prove that the (reduced) enhanced equivariant
Euler characteristics of the Milnor fibres of Berglund-H\"ubsch dual invertible polynomials
coincide up to sign and show that this implies the result about orbifold zeta functions of
Berglund-H\"ubsch-Henningson dual pairs from \cite{PEMS}. In fact a similar construction for
the equivariant Saito duality from \cite{BLMS} permits to deduce the result of \cite{MMJ}
from the main result of \cite{BLMS} directly.

\section{Enhanced Burnside ring}
Let $G$ be a finite group.

\begin{definition} \label{def1}
A {\em finite enhanced} $G$-{\em set} is a triple $(X, h, \alpha)$, where:
\begin{enumerate}
 \item[1)] $X$ is a finite $G$-set;
 \item[2)] $h$ is a one-to-one $G$-equivariant map $X\to X$;
 \item[3)] $\alpha$ associates to each point $x\in X$ a one-dimensional (complex) representation
 $\alpha_x$ of the isotropy subgroup $G_x=\{a\in G: ax=a\}$ of the point $x$ so that:
 \newline (a) for $a\in G$ one has $\alpha_{ax}(b)=\alpha_x(a^{-1}ba)$, where $b\in G_{ax}=aG_xa^{-1}$;
 \newline (b) $\alpha_{h(x)}(b)=\alpha_x(b)$.
\end{enumerate}

\end{definition}

An isomorphism between two finite enhanced $G$-sets $(X, h, \alpha)$ and $(X', h', \alpha')$
is defined in the natural way, namely it is a $G$-equivariant map $\varphi:X\to X'$ which satisfies
the conditions $\varphi\circ h=h'\circ\varphi$, $\alpha_{\varphi(x)}'=\alpha_x$.
The (Cartesian) product of two enhanced $G$-sets $(X', h', \alpha')$ and $(X'', h'', \alpha'')$
is the enhanced $G$-set $(X, h, \alpha)$ such that $X=X'\times X''$, $h(x',x'')=(h'(x'),h''(x''))$, and
for $b\in G_{(x',x'')}=G_{x'}\cap G_{x''}$ one has
$\alpha_{(x',x'')}(b)=\alpha_{x'}'(b)\cdot\alpha_{x''}''(b)$.

\begin{definition} \label{def2}
The {\em enhanced Burnside ring} $\widehat{B}(G)$ of the group $G$ is the Gro\-then\-dieck group
of finite enhanced $G$-sets with respect to the disjoint union.
The multiplication in $\widehat{B}(G)$ is defined by the Cartesian product of enhanced $G$-sets.
\end{definition}

In \cite{RMC}, there was considered the Grothendieck ring $\widetilde{B}(G)$ of so-called
finite equipped $G$-sets. One can say that a finite equipped $G$-set is an enhanced $G$-set
with the map $h$ missing. There is a natural homomorphism from the enhanced Burnside ring
$\widehat{B}(G)$ to $\widetilde{B}(G)$.

\begin{remark}
 One can define a pre-$\lambda$-ring structure (see, e.g., \cite{Knutson}) on the enhanced Burnside ring
 $\widehat{B}(G)$. Namely, one defines the symmetric power $S^n\widehat{X}$ of a finite enhanced
 $G$-set $\widehat{X}=(X,h,\alpha)$ as the triple $(S^nX, h^{(n)}, \alpha^{(n)})$, where $S^nX$ is
 the $n$th symmetric power $X^n/S_n$ of $X$, $h^{(n)}$ is the natural map $S^nX\to S^nX$
 induced by $h$ and $\alpha^{(n)}$ is defined in \cite[page 452]{RMC}. Then the pre-$\lambda$-ring
 structure on $\widehat{B}(G)$ is defined by the series
 $$
 \lambda_{[\widehat{X}]}(t)=1+[\widehat{X}]t+[S^2\widehat{X}]t^2+[S^3\widehat{X}]t^3+\ldots
 $$
\end{remark}

As an abelian group $\widehat{B}(G)$ is freely generated by the isomorphism classes of the
irreducible enhanced  $G$-sets $\widehat{X}_{H,k,\overline{h},\overline{\alpha}}$ described below.
Here $H$ is a subgroup of $G$, $k$ is a positive integer, $\overline{h}$ is a $G$-equivariant map
from $G/H$ to itself which can be identified with an element $\overline{h}\in N_G(H)/H$, where
$N_G(H)=\{a\in G: a^{-1}Ha=H\}$ is the normalizer of the subgroup $H$, and $\overline{\alpha}$ is
a 1-dimensional representation of the subgroup $H$ such that for $a\in H$ one has
\begin{equation}\label{alpha_condition}
 \overline{\alpha}(\overline{h}^{-1}a\overline{h})=\overline{\alpha}(a)\,.
\end{equation}
The enhanced  $G$-set $\widehat{X}_{H,k,\overline{h},\overline{\alpha}}$ is the triple
$(X, h, \alpha)$ defined by:
\begin{enumerate}
 \item[1)] the $G$-set $X$ is the Cartesian product $(G/H)\times \{0, 1, \ldots, k-1\}$
 of the irreducible $G$-set $G/H$ and the finite set $\{0, 1, \ldots, k-1\}$ with $k$ elements
 and with the trivial action of $G$;
 \item[2)] the $G$-equivariant map $h:X\to X$ is defined by
 $$
 h([a], i)=
 \begin{cases}
 ([a], i+1) &\text{for $i<k-1$,}\\
 ([a\overline{h}], 0) &\text{for $i=k-1$,}
 \end{cases}
 $$
 where $[a]$ is the class of $a\in G$ in the quotient $G/H$;
 \item[3)] $\alpha_{([a], i)}(b)=\overline{\alpha}(a^{-1}ba)$ for $b\in G_{([a], i)}=aHa^{-1}$.
 \end{enumerate}
The property (\ref{alpha_condition}) guarantees that the condition 3(b) in 
Definition~\ref{def1} is satisfied. Two irreducible enhanced $G$-sets
$\widehat{X}_{H,k,\overline{h},\overline{\alpha}}$ and $\widehat{X}_{H',k',\overline{h}',\overline{\alpha}'}$
are isomorphic if and only if $k=k'$ and there exists $g\in G$ such that $H'=gHg^{-1}$,
$\overline{h}'=g\overline{h}g^{-1}$, and $\overline{\alpha}'(b)=\overline{\alpha}(g^{-1}bg)$.

The ``equipped''  Burnside ring $\widetilde{B}(G)$ described above is generated by the conjugacy
classes of the pairs $(G/H, \alpha)$, where $H$ is a subgroup of $G$, $\alpha$ is a character of $H$.

For some constructions below we need a completion of the enhanced Burnside ring $\widehat{B}(G)$.

\begin{definition} \label{def3}
A {\em locally finite enhanced} $G$-{\em set} is a triple $(X, h, \alpha)$, where $X$ is a $G$-set,
$h$ and $\alpha$ are like in 
Definition~\ref{def1} such that the $h$-orbit
of each point $x\in X$ is finite and for each positive integer $m$ the set
$\{x\in X: h^m(x)=x\}$ is finite.
\end{definition}

Let $\widehat{\mathcal{B}}(G)$ be the Grothendieck ring of locally finite enhanced $G$-sets.
One can show that $\widehat{\mathcal{B}}(G)$ is the completion of the enhanced Burnside ring
$\widehat{B}(G)$ with respect to the following filtration. For $i\ge 0$, let
$$
F^i:=\{[(X, h, \alpha)]\in \widehat{B}(G): \forall x\in X\quad h^j(x)\neq x\quad\text{for }1\le j\le i\}\,.
$$
One can see that $F^i$ is an ideal in the ring $\widehat{B}(G)$ and one has the filtration
\begin{equation}\label{filtration}
\widehat{B}(G)=F^0\supset F^1\supset F^2\supset\ldots 
\end{equation}
One can show that the ring $\widehat{\mathcal{B}}(G)$ is the completion of the ring
$\widehat{B}(G)$ with respect to the filtration (\ref{filtration}).
Each element of $\widehat{\mathcal{B}}(G)$ can be written in a unique way as a series
$$
\sum a_{H,k,\overline{h},\overline{\alpha}} [\widehat{X}_{H,k,\overline{h},\overline{\alpha}}]
$$
with integer coefficients. (Elements from the enhanced Burnside ring $\widehat{B}(G)$ 
are represented by series with finitely many summands.)

\section{Enhanced equivariant Euler characteristics}\label{sec2}
For a topological $G$-space $Y$ (good enough, say, a subalgebraic variety) its
equivariant Euler characteristic $\chi^G(Y)$ with values in the Burnside ring $B(G)$
is defined (see, e.g., \cite{TtD}, \cite[Equation (2)]{GZ-FAA}; cf.\ \cite{Verdier}).
Here we define an enhanced version of the equivariant Euler characteristic
with values in the ring $\widehat{\mathcal{B}}(G)$ for a pair $(V, \varphi)$, where
$V$ is a complex manifold with a complex analytic action of the group $G$
and $\varphi:V\to V$ is a $G$-equivariant map with an additional property
(preservation of the ``age representation'': see below).
We assume that the manifold is from a class where all the objects below (say,
Lefschetz numbers) are defined. This is so, for example, for quasiprojective manifolds,
for compact complex manifolds with real boundaries and for the interiors of the latter ones.

Let $V$ be a complex manifold with a complex analytic action of a finite group $G$.
For a point $x\in V$ and an element $g$ of the isotropy subgroup $G_x$ of $x$,
its {\em age} (or fermion shift number) is defined in the following way
(\cite[Equation (3.17)]{Zaslow}, \cite[Subsection 2.1]{Ito-Reid}).
The element $g$ acts on the tangent space $T_xV$ as a complex linear operator
of finite order. It can be represented by a diagonal matrix with the diagonal
entries ${\mathbf{e}}[\omega_1]$, \dots, ${\mathbf{e}}[\omega_n]$, where $0\le\omega_i<1$
for $i=1, \ldots, n$ and ${\mathbf{e}}[r]:=\exp{(2\pi i r)}$ for a real number $r$.
The {\em age} of the element $g$ at the point $x$ is defined by 
${\rm age}_x(g)=\sum\limits_{i=1}^n\omega_i$. The function ${\rm age}_x: G_x\to \RR$
defines the one-dimensional representation $\alpha_x:G_x\to\CC^*$ by
$\alpha_x(g)={\mathbf{e}}[{\rm age}_x(g)]$.

For a $G$-equivariant map $\varphi$ of a $G$-manifold (not necessarily complex) into itself,
W.~L\"uck and J.~Rosenberg \cite{Luck} defined an equivariant Lefschetz number $L_V^G(\varphi)$
as an element of the Burnside ring $B(G)$. If the manifold $V$ is complex (and the $G$-action
is complex analytic), for a point $x$ of $V$ the one-dimensional representation $\alpha_x$
of the isotropy group $G_x$ is defined. Taking these representations into account, it is
possible, for a $G$-equivariant transformation $\varphi$ preserving the representations $\alpha_x$
($\alpha_{\varphi(x)}=\alpha_x$) to ``equip'' the L\"uck-Rosenberg Lefschetz number with
one-dimensional representations and thus to define its version $\widetilde{L}_V^G(\varphi)$
as an element of the ring $\widetilde{B}(G)$.

Let $(V,\varphi)$ be a pair consisting of a complex $G$-manifold $V$ and a $G$-equivariant
map $\varphi:V\to V$ such that $\alpha_{\varphi(x)}=\alpha_x$ for all $x\in V$.
For an enhanced $G$-set $\widehat{X}=(X,h,\alpha)$ representing an element of the group
$\widehat{\calB}(G)$, let $\widetilde{L}_X^G(h)\in\widetilde{B}(G)$ be the equipped Lefschetz number
corresponding to the transformation $h$ of $X$ considered as a zero-dimensional $G$-set
equipped with the representations $\alpha_x$, $x\in X$.

\begin{proposition}\label{prop1}
There exists a unique element $\widehat{X}$ of the completion $\widehat{\calB}(G)$ of the enhanced
Burnside ring $\widehat{B}(G)$ such that
\begin{equation} \label{equdef}
\widetilde{L}_X^{C_G(g)}(gh^m)= \widetilde{L}_V^{C_G(g)}(g\varphi^m)
\end{equation}
for all $m\ge 1$ and $g\in G$.
\end{proposition}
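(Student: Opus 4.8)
The plan is to view the assignment $\widehat{X}\mapsto\bigl(\widetilde{L}_X^{C_G(g)}(gh^m)\bigr)_{g\in G,\,m\ge 1}$ as a $\ZZ$-linear invariant of $\widehat{\calB}(G)$ and to prove separately that it is injective (giving uniqueness) and that the prescribed family $\bigl(\widetilde{L}_V^{C_G(g)}(g\varphi^m)\bigr)$ lies in its image (giving existence). Everything rests on one computation for the generators. In $\widehat{X}_{H,k,\overline{h},\overline{\alpha}}$ the map $h$ only shifts the coordinate $i\in\{0,\dots,k-1\}$ cyclically, twisting by $\overline{h}$ at each wrap, so a point $([a],i)$ can be fixed by $gh^m$ only when $(i+m)\equiv i\pmod k$, i.e.\ only when $k\mid m$. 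Hence
\begin{equation*}
\widetilde{L}_{X_{H,k,\overline{h},\overline{\alpha}}}^{C_G(g)}(gh^m)=0\qquad\text{unless }k\mid m,
\end{equation*}
and when $m=k$ the fixed locus of $gh^{k}$ consists of the $k$ points $([a],0),\dots,([a],k-1)$ for each coset $[a]\in G/H$ with $a^{-1}ga$ in the $H$-coset prescribed by $\overline{h}$, each carrying the character $\overline{\alpha}$ transported by $a$. Thus the critical level $m=k$ returns the datum $(H,\overline{h},\overline{\alpha})$, but with a multiplicity coming from the $k$ values of $i$.

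First I would settle uniqueness. Since a generator of second index $k$ feeds only the equations with $k\mid m$, the equation indexed by $m$ involves exactly the coefficients $a_{H,k,\overline{h},\overline{\alpha}}$ with $k\mid m$, so the system is triangular in $m$. Assume that all the Lefschetz numbers of an element $\widehat{X}$ vanish and argue by induction on $m$: granting that every coefficient of second index smaller than $m$ has already been shown to vanish, the equation indexed by $m$ retains only the coefficients of second index exactly $m$, and the critical-level contribution computed above is injective in these coefficients (it is the equipped, character-refined analogue of the invertibility over $\QQ$ of the table of marks). Hence all coefficients vanish and $\widehat{X}=0$.

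For existence I would run the same recursion with the right-hand sides $\widetilde{L}_V^{C_G(g)}(g\varphi^m)$. Processing $m=1,2,3,\dots$ in turn, at the $m$th step one subtracts the already-known contributions of the levels $k\mid m$ with $k<m$ and inverts the critical-level map to read off the coefficients $a_{H,m,\overline{h},\overline{\alpha}}$. Convergence in the completion is automatic: the $m$th step creates only generators of second index $m$, whose minimal $h$-period equals $m\cdot\mathrm{ord}(\overline{h})\ge m$, so for each $i$ only finitely many summands lie outside the ideal $F^i$ of the filtration~(\ref{filtration}), and the partial sums converge in $\widehat{\calB}(G)$. One must also check that the equipment attached to these fixed points really defines an enhanced $G$-set: condition 3(a) of Definition~\ref{def1} holds because the age representations are conjugation-equivariant, $\alpha_{ax}(b)=\alpha_x(a^{-1}ba)$, while condition 3(b), equivalently~(\ref{alpha_condition}), holds because of the assumed age-preservation $\alpha_{\varphi(x)}=\alpha_x$; compatibility of the data across a conjugacy class of $g$ follows from the $G$-equivariance of $\varphi$.

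The main obstacle is integrality. Because the critical-level map carries the multiplicity $k$ exhibited above (already in the trivial-group case the $m$th equation reads $\sum_{k\mid m}k\,a_k=\widetilde{L}_V(\varphi^m)$), its inverse is defined only over $\QQ$, and producing \emph{integer} coefficients $a_{H,m,\overline{h},\overline{\alpha}}$ forces divisibility relations among the numbers $\widetilde{L}_V^{C_G(g)}(g\varphi^m)$. These are precisely the Dold-type congruences satisfied by the (equivariant, equipped) Lefschetz numbers of the iterates of a genuine self-map $\varphi$; since our data come from such iterates the congruences hold, the recursively determined coefficients are integers, and the resulting series is the required element $\widehat{X}$ of $\widehat{\calB}(G)$, unique by the argument above.
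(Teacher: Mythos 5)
Your uniqueness argument is essentially the paper's: both rest on the observation that the generator $\widehat{X}_{H,k,\overline{h},\overline{\alpha}}$ contributes to equation~(\ref{equdef}) only when $k\mid m$, so the equations form a triangular system with a unique solution over $\QQ$; your fixed-point computation on the generators and the convergence argument in the filtration~(\ref{filtration}) are also sound. The genuine gap sits exactly where you place the ``main obstacle'': integrality. You assert that the needed divisibility relations ``are precisely the Dold-type congruences satisfied by the (equivariant, equipped) Lefschetz numbers of the iterates of a genuine self-map.'' For ordinary integer-valued Lefschetz numbers this is Dold's classical theorem, but for the $\widetilde{B}(C_G(g))$-valued Lefschetz numbers refined by the age characters $\alpha_x$ there is no such result to cite: the statement that the family $\bigl(\widetilde{L}_V^{C_G(g)}(g\varphi^m)\bigr)_{g,m}$ satisfies all the congruences forcing every coefficient $a_{H,m,\overline{h},\overline{\alpha}}$ of your recursion to be an integer is essentially equivalent to the existence half of the proposition itself. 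As written, your argument establishes existence and uniqueness only in $\widehat{\calB}(G)\otimes\QQ$, and the appeal to the refined congruences is circular.

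The paper closes precisely this gap by a geometric realization that your recursion bypasses: the equipped Lefschetz number $\widetilde{L}^G_V(\varphi)$ is represented by the non-degenerate fixed-point set of a generic $G$-equivariant deformation $\widetilde{\varphi}$ of $\varphi$, a virtual $G$-set of points with multiplicities $\pm 1$ carrying the characters $\alpha_x$. The collection of fixed points of all maps $g\widetilde{\varphi}^m$ for $g\in G$ and $m\le k$ is then literally a finite virtual enhanced $G$-set, hence an integral element of $\widehat{B}(G)$, and the limit over $k$ yields the required element of $\widehat{\calB}(G)$; integrality is automatic because the element is exhibited by an actual (virtual) enhanced $G$-set rather than solved for coefficient by coefficient, and the paper invokes this representative exactly to upgrade the rational solution of the triangular system to an integral one. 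If you wish to keep your recursive scheme, you must first \emph{prove} the character-refined equivariant congruences, and the natural proof of them is this deformation argument --- so the geometric step cannot be dispensed with.
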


\begin{definition}
 The element $\widehat{X}=(X,h,\alpha)$ defined in Proposition~\ref{prop1} is called the
 {\em enhanced Euler characteristic} $\widehat{\chi}^G(V,\varphi)$ of the pair $(V,\varphi)$.
\end{definition}

\begin{remark}
 This definition is inspired by the definition of an equivariant zeta-function of a map from
 \cite{GZ-FAA}. In \cite{GZ-FAA}, there is a certain inaccuracy in the proof that the definition
 gives a well-defined object.
\end{remark}

\begin{proof*}{Proof of Proposition~\ref{prop1}}
For a manifold $V$ from the class under consideration the equivariant Lefschetz number
$\widetilde{L}^G_V(\varphi)$ of a $G$-equivariant map $\varphi: V \to V$ is an element
of the equipped Burnside ring $\widetilde{B}(G)$ represented by the (non-degenerate)
fixed point set of a generic $G$-equivariant deformation $\widetilde{\varphi}$ of $\varphi$
regarded as a (virtual) $G$-set consisting of points with multiplicities $\pm 1$.
Let us take all the fixed points of the map $g\widetilde{\varphi}^m$ for all $g \in G$ and
for all $m \leq k$. It is a finite (virtual) enhanced $G$-set, i.e.\ it represents an element
of $\widehat{B}(G)$. The limit of these sets for $k \to \infty$ yields an element of
$\widehat{\calB}(G)$ with the necessary properties.

Let 
$$
 [(X,h,\alpha)] = \sum a_{H,k,\overline{h},\overline{\alpha}} [\widehat{X}_{H,k,\overline{h},\overline{\alpha}}].
$$
The coefficient at $[(G/H, \overline{\alpha})]$ on the left hand side of Equation (\ref{equdef})
for $m=k$ and $g=\overline{h}^{-1}$ is a linear combination of the coefficients $a_\bullet$
including (only) a nonzero multiple of $a_{H,k,\overline{h},\overline{\alpha}}$ and the coefficients
$a_\bullet$ with smaller $k$. This gives a triangular system of linear equations which has a unique
solution (over the rational numbers; the representative described above guarantees that the
solution is over the integers).
\end{proof*}

\begin{remark}
One can see that an analogue of these arguments corrects the inaccuracy in the proof of
\cite[Proposition~1]{GZ-FAA}. From a formal point of view it is made here for manifolds.
One can, however, extend it to CW-complexes using, e.g., the fact that they can be embedded
into manifolds (``tubular neighbourhoods'').
\end{remark}

If the $G$-equivariant map $\varphi:V\to V$ is of finite order, one has the following equation for
$\widehat{\chi}^G(V,\varphi)$. For $H$, $k$, $\overline{h}$ and $\overline{\alpha}$ as above, let
$V^{(H, k, \overline{h}, \overline{\alpha})}$ be the set of points $x\in V$ such that $G_x=H$,
$\mathbf{e}[{\rm age}_x(g)]=\overline{\alpha}(g)$ for $g\in H$, $\varphi^i(x)\notin Gx$ for $1\le i\le k-1$,
$\varphi^k(x)=\overline{h}x$. Then
\begin{equation}\label{chi_finite_order}
 \widehat{\chi}^G(V,\varphi)=\sum \chi(V^{(H, k, \overline{h}, \overline{\alpha})}/N_G(H))\cdot
 [\widehat{X}_{H, k, \overline{h}, \overline{\alpha}}]\,.
\end{equation}

\section{Orbifold zeta functions}
Let $V$ be a complex manifold with a (complex analytic) action of the group $G$ and let
$\varphi:V\to V$ be a $G$-equivariant map such that
$\mathbf{e}[{\rm age}_{\varphi(x)}(g)]=\mathbf{e}[{\rm age}_{x}(g)]$
for $g\in G_x$. For an element $g\in G$ the map $\varphi$ preserves the fixed point set
$V^g=\{x\in V: gx=x\}$ of $g$ and is a $C_G(g)$-equivariant map on it. For a root of unity
$\beta\in\CC^*$, let $V^{g,\beta}=\{x\in V^g: \mathbf{e}[{\rm age}_x(g)]=\beta\}$.
The map $\varphi$ defines a map $\varphi_{g,\beta}:V^{g,\beta}/C_G(g)\to V^{g,\beta}/C_G(g)$.
The zeta function $\zeta_{\varphi_{g,\beta}}(t)$ of this map depends only on the conjugacy class of $g$.

For a series $\psi(t)\in 1+t\ZZ[[t]]$ (or for a rational function $\psi(t)$ with $\psi(0)=1$)
and for a root of unity $\beta\in\CC^*$, let
$\left(\psi(t)\right)_{\beta}:=\psi(\beta^{-1}t)$. Informally one can say that
$\left(\psi(t)\right)_{\beta}$ is obtained from $\psi(t)$ by multiplication
of all the roots and/or the poles by $\beta$, cf.\ the definition of $\left(\psi(t)\right)_{g}$
in \cite{PEMS}.

\begin{definition}
(cf.\ \cite[Definition~5.10]{ET}, \cite[Equatio~3]{PEMS})
 The {\em orbifold zeta function} of the map $\varphi$ is defined by
 \begin{equation}\label{zeta_continuous}
  \zeta_{G,V,\varphi}^{\rm orb}(t)=\prod_{[g]\in\text{Conj\,}G}\prod_{\beta}
  \left(\zeta_{\varphi_{g,\beta}}(t)\right)_{\beta}\,.
 \end{equation}
\end{definition}

Let $\widehat{X}=(X,h,\alpha)$ be a locally finite enhanced $G$-set (representing an element
of the ring $\widehat{\mathcal{B}}(G)$).
For $g\in G$ and a root of unity $\beta\in\CC^*$, let $X^{g,\beta}=\{x\in X: gx=x, \alpha_x(g)=\beta\}$
and let $h_{g,\beta}$ be the induced map from $X^{g,\beta}/C_G(g)$ to itself.
Let $\zeta_{h_{g,\beta}}(t)\in 1+t\ZZ[[t]]$ be the zeta function of the map $h_{g,\beta}$
considered as a map from the discrete topological space $X^{g,\beta}/C_G(g)$ to itself.
(This zeta function is defined (as a series) due to the condition of local finiteness of the
enhanced $G$-set $\widehat{X}$.)

\begin{definition}
 The {\em orbifold zeta function} of the element $[\widehat{X}]\in\widehat{\mathcal{B}}(G)$ is defined by
 \begin{equation}\label{zeta_discrete}
  \zeta_{[\widehat{X}]}^{\rm orb}(t)=\prod_{[g]\in\text{Conj\,}G}\prod_{\beta}
  \left(\zeta_{h_{g,\beta}}(t)\right)_{\beta}\,.
 \end{equation}
\end{definition}

One can see that $\zeta_{\bullet}^{orb}(t)$ is an ``additive to multiplicative homomorphism''
to $1+t\ZZ[[t]]$, i.e.\  $\zeta_{[\widehat{X}_1\coprod\widehat{X}_2]}^{orb}(t)=
\zeta_{[\widehat{X}_1]}^{orb}(t)\cdot\zeta_{[\widehat{X}_2]}^{orb}(t)$ for two enhanced
$G$-sets $\widehat{X}_1$ and $\widehat{X}_2$. Therefore this definition
can in an obvious way be extended to the Grothendieck ring $\widehat{\mathcal{B}}(G)$:
$$
\zeta_{[\widehat{X}_1]-[\widehat{X}_2]}^{\rm orb}(t)=
\zeta_{[\widehat{X}_1]}^{\rm orb}(t)/\zeta_{[\widehat{X}_2]}^{\rm orb}(t)\,.
$$

\begin{proposition}
 For a complex $G$-manifold $V$ and a $G$-equivariant map $\varphi:V\to V$ such that
 $\mathbf{e}[{\rm age}_{\varphi(x)}(g)]=\mathbf{e}[{\rm age}_{x}(g)]$ for $g\in G_x$, one has
 $$
 \zeta_{G,V,\varphi}^{\rm orb}(t)=\zeta_{\widehat{\chi}_G}^{\rm orb}(t)\,,
 $$
 where $\widehat{\chi}_G=\widehat{\chi}_G(V,\varphi)$.
\end{proposition}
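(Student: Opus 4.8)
The plan is to compare the two orbifold zeta functions through their logarithms and to reduce the statement to the defining property of the enhanced Euler characteristic in Proposition~\ref{prop1}.

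First I would take logarithms. For any self-map $\psi$ of a space $Z$ one has $\log\zeta_\psi(t)=\sum_{m\ge 1}\frac{L(\psi^m)}{m}t^m$, where $L(\psi^m)$ is the (Euler-characteristic) Lefschetz number of $\psi^m$; for a discrete $Z$ this is simply the number of fixed points of $\psi^m$. Since $(\psi(t))_\beta=\psi(\beta^{-1}t)$, the twist contributes the factor $\beta^{-m}$ in degree $m$. Hence both
$$
\log\zeta_{G,V,\varphi}^{\rm orb}(t)=\sum_{m\ge 1}\frac{t^m}{m}\,\Phi_m^V,\qquad
\log\zeta_{[\widehat X]}^{\rm orb}(t)=\sum_{m\ge 1}\frac{t^m}{m}\,\Phi_m^X,
$$
with $\Phi_m^V=\sum_{[g]}\sum_\beta\beta^{-m}L\big((\varphi_{g,\beta})^m\big)$ and $\Phi_m^X=\sum_{[g]}\sum_\beta\beta^{-m}L\big((h_{g,\beta})^m\big)$. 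It therefore suffices to prove $\Phi_m^V=\Phi_m^X$ for every $m\ge1$.

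Second, I would rewrite $\Phi_m^V$ in terms of the equipped equivariant Lefschetz numbers $\widetilde L_V^{C_G(g)}(g\varphi^m)$. The map $\varphi_{g,\beta}$ acts on the quotient $V^{g,\beta}/C_G(g)$ (well defined by the age-preservation hypothesis), so its fixed points are the orbits $[x]$ with $\varphi^m(x)=cx$ for some $c\in C_G(g)$. Applying the transfer (Burnside-type) identity for the map induced on an orbit space, $L((\varphi_{g,\beta})^m)=\frac{1}{|C_G(g)|}\sum_{c\in C_G(g)}\chi\big(\mathrm{Fix}(c^{-1}\varphi^m)\cap V^{g,\beta}\big)$, then using $\sum_{[g]}\frac{1}{|C_G(g)|}\sum_{c}=\frac{1}{|G|}\sum_{g}\sum_{c\in C_G(g)}$ (valid since the inner sum is a class function of $g$) together with the substitution $a=c^{-1}$, I obtain a sum over commuting pairs,
$$
\Phi_m^V=\frac{1}{|G|}\sum_{a\in G}\ \sum_{x\in\mathrm{Fix}(a\varphi^m)}\iota_x\!\!\sum_{g\in C_G(a)\cap G_x}\!\!\alpha_x(g)^{-m},
$$
where $\iota_x=\pm1$ is the local Lefschetz index, and I have used that $\beta=\alpha_x(g)=\mathbf{e}[{\rm age}_x(g)]$ is read off from the equipping character. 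The crucial observation is that the inner double sum is a universal functional $\Psi_m$ of the element $\widetilde L_V^{C_G(a)}(a\varphi^m)\in\widetilde B(C_G(a))$: on a generator it is $\Psi_m([(K/H,\alpha)])=\frac{|K|}{|H|}\sum_{g\in H}\alpha(g)^{-m}$ (conjugation-invariant, hence well defined), so that $\Phi_m^V=\frac{1}{|G|}\sum_{a}\Psi_m\big(\widetilde L_V^{C_G(a)}(a\varphi^m)\big)$.

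Third, I would run the identical computation for the locally finite enhanced $G$-set $\widehat X=(X,h,\alpha)$. Here Lefschetz numbers are fixed-point counts, $\chi$ is cardinality, and the transfer identity is exactly Burnside's lemma, so the same functional appears: $\Phi_m^X=\frac{1}{|G|}\sum_{a}\Psi_m\big(\widetilde L_X^{C_G(a)}(ah^m)\big)$. By the defining equation~(\ref{equdef}) of $\widehat X=\widehat\chi^G(V,\varphi)$ one has $\widetilde L_X^{C_G(a)}(ah^m)=\widetilde L_V^{C_G(a)}(a\varphi^m)$ for all $a\in G$ and all $m\ge1$, whence $\Phi_m^V=\Phi_m^X$ termwise and the two zeta functions coincide. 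The main obstacle I expect is the second step: justifying the transfer formula for the induced map on $V^{g,\beta}/C_G(g)$ at the level of Lefschetz numbers rather than mere Euler characteristics, i.e.\ accounting correctly for the ``twisted'' fixed points $\varphi^m(x)=cx$ with $c\neq e$ and for points with nontrivial stabilizer, and verifying that precisely the same combinatorial functional $\Psi_m$ governs both the analytic and the discrete sides. Once this uniformity is in place, the equality is forced by Proposition~\ref{prop1}, the remainder being the bookkeeping of rewriting the inertia sum over $[g]$ as a sum over commuting pairs $(a,g)$.
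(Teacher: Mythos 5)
Your proposal is correct and follows essentially the same route as the paper's own proof: the paper likewise reduces $\zeta_{\varphi_{g,\beta}}(t)$ to the Lefschetz numbers of the iterates of $\varphi_{g,\beta}$ on $V^{g,\beta}/C_G(g)$, notes that these are determined by the fixed-point data of $a\varphi^m$ for $a\in C_G(g)$ (your transfer/Burnside step, which the paper asserts without detail), and concludes via the defining equation~(\ref{equdef}) of Proposition~\ref{prop1}. Your logarithmic bookkeeping, the passage to commuting pairs, and the functional $\Psi_m$ on $\widetilde{B}(C_G(a))$ are simply an explicit rendering of the argument the paper compresses into three sentences.
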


\begin{proof}
To know $\zeta_{\varphi_{g,\beta}}(t)$, one needs to know the Lefschetz numbers of the iterates
of $\phi_{g,\beta}$ on the quotient of the part of the fixed point set of $\langle g \rangle$
with $\alpha_x(g)=\beta$ by the centralizer $C_G(g)$. To know them it is sufficient to determine
the fixed point sets of $a\varphi^k$ for all $k$ and $a \in C_G(g)$. They are determined by
the enhanced equivariant Euler characteristic $\widehat{\chi}_G$ and therefore the Lefschetz
numbers of the iterates of $\varphi_{g,\beta}$ (and thus the corresponding zeta functions)
for $(V,\varphi)$ and for an enhanced $G$-set $(X,h,\alpha)$ representing $\widehat{\chi}^G$
coincide. 
\end{proof}

Now let $G$ be abelian. In the sequel we will use a formula for the orbifold zeta function
of the basic element $[\widehat{X}_{H,k,\overline{h},\overline{\alpha}}]$ of the ring (group) $\widehat{B}(G)$.

\begin{lemma}\label{orbifold_basic}
 For $\widehat{X}=\widehat{X}_{H,k,\overline{h},\overline{\alpha}}$, one has
 $$
 \zeta^{\rm orb}_{[\widehat{X}]}(t)=\left(1-t^{{\rm lcm}(k,m)}\right)^{\frac{k\vert H\vert}{{\rm lcm}(k,m)}}\,,
 $$
 where $m=\frac{\vert H\vert}{\vert \{g\in H:\alpha(g)=1\}\vert}$\,, ${\rm lcm}(\cdot,\cdot)$ denotes
 the least common multiple.
\end{lemma}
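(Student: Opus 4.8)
My plan is to compute the orbifold zeta function directly from Definition (\ref{zeta_discrete}) by unwinding the group action on the explicit irreducible enhanced $G$-set $\widehat{X}=\widehat{X}_{H,k,\overline{h},\overline{\alpha}}$, exploiting throughout that $G$ is abelian (so $C_G(g)=G$ and $N_G(H)=G$, and conjugation by group elements is trivial). Since $G$ is abelian, the class $[a]\in G/H$ is acted on by $g\in G$ simply as $[ga]$, the isotropy subgroup at every point is exactly $H$, and the representation $\alpha_{([a],i)}$ reduces to $\overline{\alpha}$ itself (the conjugation $\overline{\alpha}(a^{-1}ba)$ collapses to $\overline{\alpha}(b)$). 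So first I would identify, for each $g\in G$ and each root of unity $\beta$, the set $X^{g,\beta}=\{x\in X:gx=x,\ \alpha_x(g)=\beta\}$. The condition $gx=x$ for $x=([a],i)$ forces $g\in H$ (as the isotropy is $H$), and then $\alpha_x(g)=\overline{\alpha}(g)$, so $X^{g,\beta}$ is empty unless $g\in H$ and $\beta=\overline{\alpha}(g)$, in which case $X^{g,\beta}=X$ is the whole set $(G/H)\times\{0,\dots,k-1\}$.

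The second step is to understand the induced map $h_{g,\beta}$ on the quotient $X^{g,\beta}/G=X/G$ and compute its zeta function as a discrete dynamical map. The $G$-action on $X=(G/H)\times\{0,\dots,k-1\}$ has quotient $X/G\cong\{0,\dots,k-1\}$ (the $G/H$-coordinate is a single orbit under the transitive $G$-action), so the quotient has $k$ points. I would then read off from the explicit formula for $h$ how $h$ descends: $h$ sends $([a],i)\mapsto([a],i+1)$ for $i<k-1$ and $([a],i)\mapsto([a\overline{h}],0)$ for $i=k-1$; on the quotient $\{0,\dots,k-1\}$ this is simply the cyclic shift $i\mapsto i+1\bmod k$, a single $k$-cycle. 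The zeta function of a permutation that is a single $k$-cycle on $k$ discrete points is $\zeta_{h_{g,\beta}}(t)=(1-t^k)^{-1}$ (the Lefschetz numbers of iterates being $k$ when $k\mid n$ and $0$ otherwise, so only the $k$-fold iterates have fixed points). Here I must be careful about the convention relating $\zeta$ to Lefschetz numbers on a $0$-dimensional space and about the sign/exponent, but the combinatorics of a single cycle is standard.

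The third step assembles the product (\ref{zeta_discrete}) over conjugacy classes (all singletons, since $G$ is abelian) and roots of unity, applying the twist $(\cdot)_\beta$, i.e.\ $t\mapsto\beta^{-1}t$. Only pairs $(g,\beta)$ with $g\in H$ and $\beta=\overline{\alpha}(g)$ contribute, each giving the factor $\big((1-t^k)^{-1}\big)_{\overline{\alpha}(g)}=(1-\overline{\alpha}(g)^{-k}t^k)^{-1}$. So the product becomes $\prod_{g\in H}(1-\overline{\alpha}(g)^{-k}t^k)^{-1}$, and I would reorganize this product by grouping the elements $g\in H$ according to the value of $\overline{\alpha}(g)^k$. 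The key number-theoretic input is the quantity $m=\lvert H\rvert/\lvert\ker\overline{\alpha}\rvert$, which is the order of the image $\overline{\alpha}(H)\subset\CC^*$, a cyclic group of order $m$; raising to the $k$th power sends this cyclic group of order $m$ onto a cyclic group of order $m/\gcd(k,m)$, and each fiber has size $\lvert H\rvert\gcd(k,m)/m$. Collecting the geometric-series-type factors $(1-\zeta t^k)^{-1}$ over all $m/\gcd(k,m)$ th roots of unity $\zeta$, each repeated $\lvert H\rvert\gcd(k,m)/m$ times, and using $\prod_{\zeta^{d}=1}(1-\zeta t^k)=(1-t^{kd})$ with $d=m/\gcd(k,m)$, I expect the exponents to consolidate into the stated form, since $kd=k\,m/\gcd(k,m)=\mathrm{lcm}(k,m)$ and the total multiplicity $(\lvert H\rvert\gcd(k,m)/m)\cdot k=k\lvert H\rvert/d=k\lvert H\rvert\gcd(k,m)/m$ matches $k\lvert H\rvert/\mathrm{lcm}(k,m)$.

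\textbf{The main obstacle} I anticipate is precisely this last bookkeeping: correctly counting, for each $k$th-power value $\overline{\alpha}(g)^{-k}$, how many $g\in H$ realize it, and then checking that the twisted geometric factors collapse cleanly to $(1-t^{\mathrm{lcm}(k,m)})^{k\lvert H\rvert/\mathrm{lcm}(k,m)}$. The group-theoretic identity $\prod_{j=0}^{d-1}(1-\zeta_d^j\,t^k)=1-t^{kd}$ for a primitive $d$th root of unity $\zeta_d$ is the crux that converts the twist-by-$\beta$ operation into a single factor; getting the exponent right requires carefully tracking that the $d$ distinct values of $\overline{\alpha}(g)^{-k}$ are exactly the $d$th roots of unity and that each occurs with equal multiplicity $\lvert H\rvert/d$.
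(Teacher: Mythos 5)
Your argument follows the paper's proof step for step: you identify $X^{g,\beta}$ as empty unless $g\in H$ and $\beta=\overline{\alpha}(g)$ (in which case it is all of $X$), observe that $X/G$ consists of $k$ points cyclically permuted by the induced map, take the product of the $\beta$-twisted factors over $g\in H$, and regroup via the $k$-th power map on the cyclic image $\overline{\alpha}(H)$ of order $m$ together with the identity $\prod_{\zeta^{d}=1}(1-\zeta t^{k})=1-t^{kd}$ with $d=m/\gcd(k,m)$. All of this is exactly what the paper does, and your fiber count (each $d$-th root of unity realized by $\vert H\vert\gcd(k,m)/m=\vert H\vert/d$ elements of $H$) is correct.

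There is, however, one genuine error, located precisely at the point you flagged as delicate: the zeta-function convention. You take the zeta function of a cyclic permutation of $k$ points to be $(1-t^{k})^{-1}$, i.e.\ the dynamical convention $\exp\bigl(\sum_{n\ge 1}L(h^{n})t^{n}/n\bigr)$. The paper uses the opposite convention, $\prod_{q}\det(\mathrm{id}-t\,h_{*}\vert_{H_{q}})^{(-1)^{q}}=\exp\bigl(-\sum_{n\ge 1}L(h^{n})t^{n}/n\bigr)$, under which the $k$-cycle has zeta function $1-t^{k}$; the paper states this explicitly in its proof. Since every factor in the orbifold product inherits your inversion, your computation as written yields $\bigl(1-t^{\mathrm{lcm}(k,m)}\bigr)^{-k\vert H\vert/\mathrm{lcm}(k,m)}$, the reciprocal of the Lemma's right-hand side --- so being ``careful about the convention'' is not optional bookkeeping here but the difference between proving the statement and proving its inverse. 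Separately, your final multiplicity check contains a stray factor of $k$: the exponent on $1-t^{kd}$ is just the fiber size $\vert H\vert/d$, and the identity you need is $\vert H\vert\gcd(k,m)/m=k\vert H\vert/\mathrm{lcm}(k,m)$ (with no extra factor); your claimed matching $(\vert H\vert\gcd(k,m)/m)\cdot k=k\vert H\vert/\mathrm{lcm}(k,m)$ is false for $k>1$, though since the underlying counting is right this slip is cosmetic. With the paper's convention substituted in and the extra $k$ deleted, your proof becomes the paper's proof verbatim.
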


\begin{proof}
 Recall that $\widehat{X}_{H,k,\overline{h},\overline{\alpha}}=(X,h,\alpha)$, where $X$, $h$ and $\alpha$
 are described above (Section~\ref{sec2}). One can see that $X^g=X$ for $g\in H$ and $X^g=\emptyset$
 otherwise. Moreover, $X^{g,\beta}=\emptyset$ for $\beta\neq\alpha(g)$ and $X^{g,\alpha(g)}=X$.
 Since $G$ is abelian, one has $C_G(g)=G$. The quotient $X/G$ consists of $k$ points permuted
 cyclically by the map $X/G\to X/G$ induced by $h$. The (usual) zeta function of this map is
 $1-t^k$. Therefore one has
 $$
 \zeta^{\rm orb}_{[\widehat{X}]}(t)=\prod_{g\in H}(1-t^k)_{\alpha(g)}=
 \prod_{g\in H}(1-(\alpha(g))^{-k}t^k)\,.
 $$
 The kernel ${\rm Ker\,}\alpha$ of $\alpha:H\to\CC^*$ is $\{g\in H: \alpha(g)=1\}$.
 Therefore its image consists of all the $m$th roots of unity with $m$ given by the equation above.
 The numbers $\left(\alpha(g)\right)^{-k}$, $g\in H$, are all the roots of unity of degree
 $\ell=\frac{m}{\gcd(m,k)}$, each one represented $\frac{\vert H\vert}{\ell}$ times.
 Therefore 
 $$
 \zeta^{\rm orb}_{[\widehat{X}]}(t)=
 \left(\prod_{j=0}^{\ell-1}
 \left(1-{\mathbf e}\left[\frac{j}{\ell}\right]t^k\right)\right)^{\frac{\vert H\vert}{\ell}}
 =\left(1-t^{k\ell}\right)^{\frac{\vert H\vert}{\ell}}=
 \left(1-t^{{\rm lcm}(k,m)}\right)^{\frac{k\vert H\vert}{{\rm lcm}(k,m)}}\,.
 $$
\end{proof}

\section{The enhanced equivariant Saito duality}
Let $\calG$ be a finite abelian group and let $\widehat{B}_1(\calG)$ be the subgroup of the enhanced
Burnside ring $\widehat{B}(\calG)$ generated by the isomorphism classes of finite enhanced sets
$(X,h,\alpha)$ such that $h(x)\in \calG x$ for all $x\in X$. (The subgroup $\widehat{B}_1(\calG)$
is in general not a subring.) As an abelian group it is freely generated by the isomorphism classes
of the enhanced $\calG$-sets $\widehat{X}_{H,1,\overline{h},\overline{\alpha}}$, where $\overline{h}\in \calG/H$,
$\overline{\alpha}\in\text{Hom}(H,\CC^*)$. In this case $h$ and $\alpha$ described in the definition
of the enhanced $\calG$-sets $\widehat{X}_{H,1,\overline{h},\overline{\alpha}}$ coincide with $\overline{h}$ and
$\overline{\alpha}$ respectively and therefore we shall use the notation $\widehat{X}_{H,1,h,\alpha}$.

Let $\calG^*=\text{Hom}(\calG,\CC^*)$  be the group of characters of $\calG$. (As an abstract group
it is isomorphic to $\calG$, but not in a canonical way.) For a subgroup $H\subset\calG$ its dual
subgroup $\widetilde{H}\subset\calG^*$ is defined by
$$
\widetilde{H}=\{\gamma\in\calG^*: \gamma(a)=1 \text{ for all } a\in H\} 
$$
(see \cite{BH2}).
The factor group $\calG/H$ is canonically isomorphic to $\widetilde{H}^*=\text{Hom}(\widetilde{H},\CC^*)$
and the group of characters $H^*=\text{Hom}(H,\CC^*)$ is canonically isomorphic to $\calG^*/\widetilde{H}$.
In this way the element $h\in \calG/H$ (i.e.\  a $\calG$-equivariant map $\calG/H\to\calG/H)$
defines a one-dimensional representation $\widetilde{h}$ of the subgroup $\widetilde{H}\subset\calG^*$
and the representation $\alpha$ of $H$ defines an element $\widetilde{\alpha}$ of the quotient
$\calG^*/\widetilde{H}$, i.e.\  a $\calG^*$-equivariant map $\calG^*/\widetilde{H}\to\calG^*/\widetilde{H}$.

\begin{definition}
 The {\em enhanced equivariant Saito duality} for the group $\calG$ is the group homomorphism
 $\widehat{D}_{\calG}:\widehat{B}_1(\calG)\to\widehat{B}_1(\calG^*)$ defined (on the generators) by
 $$
 \widehat{D}_{\calG}(\widehat{X}_{H,1,h,\alpha})=
 (\widehat{X}_{\widetilde{H},1,\widetilde{\alpha},\widetilde{h}})\,.
 $$
\end{definition}

One can easily see that $\widehat{D}_{\calG^*}\circ\widehat{D}_{\calG}={\rm id}$.

In the sequel we will need the following fact. Let $\alpha\in{\rm Hom}(H,\CC^*)$ be a
one-dimensional representation of a subgroup $H$ of $\calG$ with the kernel
${\rm Ker\,}\alpha=\{g\in H:\alpha(g)=1\}$ and let $\widetilde{\alpha}$ be the corresponding element
of $\calG^*/\widetilde{H}$. By abuse of notations let $\widetilde{\alpha}$ be also a representative
of $\widetilde{\alpha}$ in $\calG^*$.

\begin{lemma}\label{dual_to_Ker}
 The subgroup $\widetilde{{\rm Ker\,}\alpha}$ of $\calG^*$ dual to the kernel ${\rm Ker\,}\alpha$
 of the representation $\alpha$ is $\langle\widetilde{\alpha}\rangle+\widetilde{H}$, where
 $\langle\widetilde{\alpha}\rangle$ is the subgroup generated by $\widetilde{\alpha}$.
\end{lemma}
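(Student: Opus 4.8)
The plan is to prove the asserted equality by establishing one inclusion directly and then forcing equality through an order count, using the two standard facts about Pontryagin duality for finite abelian groups: that duality reverses inclusions, and that $|\widetilde{K}|=[\calG:K]$ for every subgroup $K\subseteq\calG$. First I would pin down what the representative $\widetilde{\alpha}\in\calG^*$ actually is. The canonical isomorphism $H^*\cong\calG^*/\widetilde{H}$ invoked in the excerpt is induced by restriction of characters: dualizing the exact sequence $1\to H\to\calG\to\calG/H\to 1$ yields $1\to\widetilde{H}\to\calG^*\to H^*\to 1$, whose right-hand map is $\gamma\mapsto\gamma|_H$ and is surjective because $\CC^*$ is divisible. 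Hence a representative $\widetilde{\alpha}\in\calG^*$ is precisely a character of $\calG$ with $\widetilde{\alpha}|_H=\alpha$.

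With this description the inclusion is immediate. Since ${\rm Ker\,}\alpha\subseteq H$, reversal of the duality gives $\widetilde{H}\subseteq\widetilde{{\rm Ker\,}\alpha}$. Moreover, for $g\in{\rm Ker\,}\alpha$ one has $\widetilde{\alpha}(g)=\alpha(g)=1$, so $\widetilde{\alpha}\in\widetilde{{\rm Ker\,}\alpha}$ and therefore $\langle\widetilde{\alpha}\rangle\subseteq\widetilde{{\rm Ker\,}\alpha}$. Combining the two, $\langle\widetilde{\alpha}\rangle+\widetilde{H}\subseteq\widetilde{{\rm Ker\,}\alpha}$. Next I would compute the two orders. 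On one side, $|\widetilde{{\rm Ker\,}\alpha}|=[\calG:{\rm Ker\,}\alpha]=[\calG:H]\cdot[H:{\rm Ker\,}\alpha]=|\widetilde{H}|\cdot m$, where $m=[H:{\rm Ker\,}\alpha]$ is the order of the (cyclic) image of $\alpha$ in $\CC^*$, i.e.\ the same integer $m$ as in Lemma~\ref{orbifold_basic}. On the other side, the order of $\widetilde{\alpha}$ modulo $\widetilde{H}$ equals the order of $\alpha$ in $H^*$, which is again $m$; thus the cyclic quotient $(\langle\widetilde{\alpha}\rangle+\widetilde{H})/\widetilde{H}$ has order $m$ and $|\langle\widetilde{\alpha}\rangle+\widetilde{H}|=m\cdot|\widetilde{H}|$. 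The two orders coincide, so the inclusion already established must be an equality.

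The only genuinely delicate point is the correct identification of the representative $\widetilde{\alpha}$ as a character of $\calG$ extending $\alpha$, together with the observation that its order modulo $\widetilde{H}$ is exactly $m$ and not a proper divisor; once the isomorphism $H^*\cong\calG^*/\widetilde{H}$ is recognized as restriction, everything else is routine bookkeeping. Note that no second inclusion needs to be checked by hand, since the order count closes the argument — this is the feature I would lean on to keep the proof short.
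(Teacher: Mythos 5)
Your proof is correct, and its second half genuinely differs from the paper's. Both arguments begin identically: you and the authors pin down $\widetilde{\alpha}$ as (the class of) a character of $\calG$ restricting to $\alpha$ on $H$, and check the easy inclusion $\langle\widetilde{\alpha}\rangle+\widetilde{H}\subseteq\widetilde{{\rm Ker}\,\alpha}$ by evaluating on ${\rm Ker}\,\alpha$. But where you close the argument by counting orders --- $\vert\widetilde{{\rm Ker}\,\alpha}\vert=[\calG:{\rm Ker}\,\alpha]=m\cdot\vert\widetilde{H}\vert$ versus $\vert\langle\widetilde{\alpha}\rangle+\widetilde{H}\vert=m\cdot\vert\widetilde{H}\vert$, using that the class of $\widetilde{\alpha}$ in $\calG^*/\widetilde{H}\cong H^*$ has order exactly $m=[H:{\rm Ker}\,\alpha]$ --- the paper instead proves the reverse inclusion directly: given $\omega\in\calG^*$ annihilating ${\rm Ker}\,\alpha$, its restriction $\omega\vert_H$ factors through the cyclic group $H/{\rm Ker}\,\alpha$, whose character group is generated by the character induced by $\alpha$, so $\omega\vert_H=\alpha^s$ for some $s$ and hence $\omega\equiv\widetilde{\alpha}^s \bmod \widetilde{H}$. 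The paper's route is explicit (it exhibits the power of $\widetilde{\alpha}$ representing a given $\omega$) and needs only the cyclicity of the image of $\alpha$; your route trades that structural step for the standard annihilator-order formula $\vert\widetilde{K}\vert=[\calG:K]$, making the second inclusion automatic. Both steps you rely on are sound: the surjectivity of restriction $\calG^*\to H^*$ follows from divisibility of $\CC^*$ as you say, and your $m$ agrees with the $m$ of Lemma~\ref{orbifold_basic}. One small caution: the order formula you invoke is itself essentially equivalent to the exactness of $1\to\widetilde{H}\to\calG^*\to H^*\to 1$, so your proof is not logically lighter than the paper's, merely packaged differently --- but it is arguably more routine to verify, since no structural claim about which characters of $H$ kill ${\rm Ker}\,\alpha$ is needed.
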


\begin{proof}
 Recall that the canonical pairing between a finite abelian group and the group
 of its characters (with values in the set of roots of unity) is denoted by
 $(\bullet,\bullet)_{\bullet}$. 
  We have to show that 
 $$
 \langle\widetilde{\alpha}\rangle+\widetilde{H}=\widetilde{{\rm Ker\,}\alpha}=
 \{\omega\in\calG^*:(\omega,g)_{\calG}=1\text{ for all }g\in{\rm Ker\,}\alpha\}\,.
 $$
 
 The element $\widetilde{\alpha}\in H^*=\calG^*/\widetilde{H}$
 corresponding to $\alpha$ is defined by $(\widetilde{\alpha},g)_H=\alpha(g)$ for all $g\in H$.
 Therefore $(\widetilde{\alpha},g)_H=1$ for $g\in{\rm Ker\,}\alpha$. Elements of $\widetilde{H}$
 pair with all the elements of $H$ (and thus of ${\rm Ker\,}\alpha\subset H$) with value $1$.
 Therefore $\langle\widetilde{\alpha}\rangle+\widetilde{H}\subset\widetilde{{\rm Ker\,}\alpha}$.
 
 On the other hand, let $\omega$ be an element of $\calG^*$ such that $(\omega, g)_{\calG}=1$ for all
 $g\in{\rm Ker\,}\alpha$. As a function on $H$, $(\omega, \bullet)_{\calG}$ is a power, say $\alpha^s$,
 of $\alpha$ (since the image of $\alpha$ is a cyclic subgroup of $\CC^*$). This means that
 $\omega=\widetilde{\alpha}^s \mod \widetilde{H}$.
\end{proof}

Let $G$ be a subgroup of $\calG$ and let $\widetilde{G}$ be the dual subgroup of $\calG^*$.
A $\calG$-set $X$ can be considered as a $G$-set, a
$\calG$-equivariant map $X\to X$ is at the same time $G$-equivariant, a one-dimensional representation
$\alpha_x$ of the isotropy subgroup $\calG_x$ defines (by restriction) a one-dimensional representation
of the isotropy subgroup $G_x=\calG_x\cap G$. This defines a natural (ring) ``reduction'' homomorphism
$R^{\calG}_{G}:\widehat{B}(\calG)\to\widehat{B}(G)$.

\begin{theorem}\label{theo1}
 For an element $\widehat{X}\in \widehat{B}_1(\calG)$, one has
 \begin{equation}\label{dual_orb}
 \zeta^{\rm orb}_{R^{\calG}_{G}[\widehat{X}]}(t)=
 \zeta^{\rm orb}_{R^{\calG^*}_{\widetilde{G}}\widehat{D}_{\calG}[\widehat{X}]}(t)\,.
\end{equation}
\end{theorem}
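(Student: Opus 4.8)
The plan is to reduce to the free generators of $\widehat{B}_1(\calG)$ and then to compare two explicit products obtained from Lemma~\ref{orbifold_basic}. Since $\zeta^{\rm orb}_{\bullet}(t)$ sends disjoint unions to products and $\widehat{D}_{\calG}$, $R^{\calG}_{G}$, $R^{\calG^*}_{\widetilde G}$ are group homomorphisms, it suffices to prove (\ref{dual_orb}) for a single generator $\widehat{X}=\widehat{X}_{H,1,h,\alpha}$.

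First I would analyse the left hand side. As a $\calG$-set the generator is $\calG/H$, and since $\calG$ is abelian every isotropy group is $H$; hence after restriction to $G\subset\calG$ every point has isotropy $G\cap H$ and carries the representation $\alpha|_{G\cap H}$. The $G$-orbits are the cosets of $GH$, so there are $d:=|\calG/GH|$ of them, each isomorphic to $G/(G\cap H)$, and $h$ permutes them by translation; writing $q$ for the order of the image of $h$ in $\calG/GH$, the orbits split into $d/q$ cycles of length $q$. Thus $R^{\calG}_{G}[\widehat{X}]$ is $d/q$ times a single generator with subgroup $G\cap H$, length $q$ and enhancement $\alpha|_{G\cap H}$, and since the formula of Lemma~\ref{orbifold_basic} depends only on these data I obtain
\[
\zeta^{\rm orb}_{R^{\calG}_{G}[\widehat{X}]}(t)=\bigl(1-t^{{\rm lcm}(q,m_L)}\bigr)^{d\,|G\cap H|/{\rm lcm}(q,m_L)},
\]
where $m_L$ is the order of $\alpha|_{G\cap H}$, i.e.\ of the image $\alpha(G\cap H)\subset\CC^*$.

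Running the identical analysis in $\calG^*$ for $\widehat{D}_{\calG}[\widehat{X}]=\widehat{X}_{\widetilde H,1,\widetilde\alpha,\widetilde h}$ and the subgroup $\widetilde G$ gives
\[
\zeta^{\rm orb}_{R^{\calG^*}_{\widetilde G}\widehat{D}_{\calG}[\widehat{X}]}(t)=\bigl(1-t^{{\rm lcm}(q^*,m_R)}\bigr)^{d^*\,|\widetilde G\cap\widetilde H|/{\rm lcm}(q^*,m_R)},
\]
with $d^*=|\calG^*/\widetilde G\widetilde H|$, $q^*$ the order of the image of $\widetilde\alpha$ in $\calG^*/\widetilde G\widetilde H$, and $m_R$ the order of $\widetilde h|_{\widetilde G\cap\widetilde H}$. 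It remains to match the two products, which is the heart of the proof. Granted that the powers of $t$ on the two sides coincide, the outer exponents agree as well: the duality relations $\widetilde G\cap\widetilde H=\widetilde{GH}$, $\widetilde G\widetilde H=\widetilde{G\cap H}$ and $|\widetilde K|=|\calG|/|K|$ give $d\,|G\cap H|=|\calG|\,|G\cap H|/|GH|=d^*\,|\widetilde G\cap\widetilde H|$.

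Coincidence of the powers of $t$ reduces, since ${\rm lcm}$ is symmetric, to the two identifications $q=m_R$ and $q^*=m_L$; establishing these is the main obstacle, as it requires passing back and forth between a quotient of $\calG$ (resp.\ $\calG^*$) and a character group. For $q=m_R$ I would use that under $\calG/H\cong\widetilde H^*$ the element $h$ corresponds to $\widetilde h$, so that $\widetilde h(\gamma)^r=\gamma(\tilde h^r)$ equals $1$ for all $\gamma\in\widetilde G\cap\widetilde H=\widetilde{GH}$ exactly when $\tilde h^r\in GH$; hence the order $m_R$ of $\widetilde h|_{\widetilde G\cap\widetilde H}$ equals the order $q$ of $h$ modulo $GH$. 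For $q^*=m_L$ I would invoke Lemma~\ref{dual_to_Ker} in the form $\langle\widetilde\alpha\rangle+\widetilde H=\widetilde{{\rm Ker}\,\alpha}$: then $\langle\widetilde\alpha\rangle+\widetilde G\widetilde H=\widetilde{{\rm Ker}\,\alpha}+\widetilde G=\widetilde{({\rm Ker}\,\alpha)\cap G}$, so that the order $q^*$ of $\widetilde\alpha$ modulo $\widetilde G\widetilde H=\widetilde{G\cap H}$ equals $|G\cap H|/|({\rm Ker}\,\alpha)\cap G|$, which is precisely the order $m_L$ of $\alpha|_{G\cap H}$. Combining, ${\rm lcm}(q,m_L)={\rm lcm}(m_R,q^*)$ and the two sides of (\ref{dual_orb}) coincide.
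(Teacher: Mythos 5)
Your proof is correct and follows essentially the same route as the paper's: reduce to a generator $\widehat{X}_{H,1,h,\alpha}$, compute $R^{\calG}_{G}[\widehat{X}]$ as $\frac{d}{q}[\widehat{X}^{G}_{G\cap H,q,\cdot,\alpha|_{G\cap H}}]$ (your $d/q$ and $q$ match the paper's coefficient and $k$), apply Lemma~\ref{orbifold_basic} to both sides, and establish the cross-identities $q^*=m_L$ and $q=m_R$ (the paper's $k'=m$ and $m'=k$) via Lemma~\ref{dual_to_Ker} together with the duality exchanging sums and intersections of subgroups. The only cosmetic difference is that for $q=m_R$ you argue directly through the pairing $\widetilde h(\gamma)=\gamma(h)$ where the paper invokes the symmetric duality argument; this is an equivalent computation.
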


\begin{proof}
 We can verify (\ref{dual_orb}) for the generator
 $\widehat{X}=\widehat{X}^{\calG}_{H,1,h,\alpha}=(X,h,\alpha)$ of $\widehat{B}_1(\calG)$.
 The isotropy subgroup of any point of $X$ considered as a $G$-set is $G\cap H$.
 Let
 $$
 k:=\min\{\ell>0: h^{\ell}\in (G+H)/H\}=\frac{\vert\langle h\rangle+G+H\vert}{\vert G+H\vert}\,.
 $$
 One has $R^{\calG}_{G}[\widehat{X}]=
 \frac{\vert\calG\vert\cdot\vert G\cap H\vert}{k\vert H\vert\cdot\vert G\vert}
 [\widehat{X}^{G}_{H,k,h^k,\alpha_{\vert G\cap H}}]$.
 By Lemma~\ref{orbifold_basic} one has
 \begin{equation}\label{zeta1}
  \zeta^{\rm orb}_{R^{\calG}_{G}[\widehat{X}]}(t)=
  \left(1-t^{{\rm lcm}(k,m)}\right)^
  {\frac{\vert G\cap H\vert^2\vert\calG\vert}{{\rm lcm}(k,m)\cdot\vert G\vert\cdot\vert H\vert}}\,,
 \end{equation}
 where $m=\frac{\vert G\cap H\vert}{\vert{\rm Ker\,}\alpha\cap G\vert}$\,.
 One has $\widehat{D}_{\calG}[\widehat{X}]=\widehat{X}^{\calG^*}_{\widetilde{H},1,\widetilde{\alpha},\widetilde{h}}$.
 Therefore 
 \begin{equation}\label{zeta2}
  \zeta^{\rm orb}_{R^{\calG^*}_{\widetilde{G}}\widehat{D}_{\calG}[\widehat{X}]}(t)=
  \left(1-t^{{\rm lcm}(k',m')}\right)^
  {\frac
  {\vert \widetilde{G}\cap \widetilde{H}\vert^2\vert\calG^*\vert}
  {{\rm lcm}(k',m')\cdot\vert \widetilde{G}\vert\cdot\vert \widetilde{H}\vert}}\,,
  \end{equation}
 where $k'=\frac{\vert\langle \widetilde{\alpha}\rangle+\widetilde{G}+\widetilde{H}\vert}
 {\vert\widetilde{G}+\widetilde{H}\vert}$\,,
 $m'=\frac{\vert \widetilde{G}\cap \widetilde{H}\vert}
 {\vert{\rm Ker\,}\widetilde{h}\cap \widetilde{G}\vert}$\,.
 
 The subgroup of $\calG^*$ dual to $G\cap H\subset\calG$ (in the numerator of $m$) is
 $\widetilde{G}+\widetilde{H}$ (in the denominator of $k'$). Due to Lemma~\ref{dual_to_Ker}
 the subgroup of $\calG^*$ dual to ${\rm Ker\,}\alpha\cap G$ (in the denominator of $m$)
 is $\langle \widetilde{\alpha}\rangle+\widetilde{G}+\widetilde{H}$ (in the numerator of $k'$).
 Therefore $k'=m$. In the same way $m'=k$. Therefore the exponents ${\rm lcm}(k,m)$ and
 ${\rm lcm}(k',m')$ in (\ref{zeta1}) and (\ref{zeta2}) coincide. Using the fact that
 $\vert G\cap H\vert=\frac{\vert G\vert\cdot\vert H\vert}{\vert G+H\vert}$,
 it is easy to show that the other exponents in (\ref{zeta1}) and (\ref{zeta2}) coincide as well.
\end{proof}

\section{Enhanced Saito duality for invertible polynomials}
Let $f$ be an invertible polynomial in $n$ variables $x_1$, \dots, $x_n$. This means that $f$
is a quasihomogeneous polynomial of the form
$$
f(x_1, \ldots, x_n)=\sum_{i=1}^n a_i \prod_{j=1}^n x_j^{E_{ij}}\,,
$$
where $a_i$ are non-zero complex numbers, $E_{ij}$ are non-negative integers such that the matrix
$E=(E_{ij})$ is non-degenerate. The group of the (diagonal) symmetries of $f$ is
$$
G_f=\{\underline{\lambda}=(\lambda_1, \ldots, \lambda_n)\in(\CC^*)^n:
f(\lambda_1x_1, \ldots, \lambda_nx_n)=f(x_1, \ldots, x_n)\}\,.
$$
One can see that $G_f$ is an abelian group of order $\vert\det E\,\vert$. The action of the group $G_f$
on $\CC^n$ ($(\lambda_1, \ldots, \lambda_n)*(x_1, \ldots, x_n)=(\lambda_1x_1, \ldots, \lambda_nx_n)$)
defines the corresponding one dimensional representation $\alpha_f$ of $G_f$ on the exterior power
$\bigwedge^n\CC^n\cong \CC$: $\alpha_f(\lambda_1, \ldots, \lambda_n)=\lambda_1\cdot\ldots\cdot\lambda_n$
for $(\lambda_1, \ldots, \lambda_n)\in G_f$.

The Minor fibre $V_f=\{x\in\CC^n:f(x)=1\}$ of the (quasihomogeneous) polynomial $f$
is a complex manifold of dimension $n-1$ with the natural action of the group $G_f$. The polynomial
$f$ is quasihomogeneous with respect to the rational weights $q_1$, \dots, $q_n$ defined by the
equation
$$
E\cdot(q_1, \ldots, q_n)^T=(1,\ldots, 1)^T\,,
$$
i.e.\  $f({\rm e}[q_1\tau]x_1, \ldots, {\rm e}[q_n\tau]x_n)={\rm e}[\tau]f(x_1, \ldots, x_n)$.
The element $h_f=({\rm e}[q_1\tau], \ldots, {\rm e}[q_n\tau])\in(\CC^*)^n$ belongs to the group $G_f$
and represents the classical monodromy transformation of the polynomial $f$. It is also called
{\em the exponential grading operator}.

The Berglund-H\"ubsch transpose of $f$ is
$$
\widetilde{f}(x_1, \ldots, x_n)=\sum_{i=1}^n a_i \prod_{j=1}^n x_j^{E_{ji}}\,.
$$
The group $G_{\widetilde{f}}$ of symmetries of $\widetilde{f}$ is in a canonical way isomorphic
to the group $G_f^*={\rm Hom}(G_f,\CC^*)$ of characters of $G_f$ (see, e.g., \cite[Proposition 2]{BLMS}).
For a subgroup $G$ of $G_f$, one has the (Berglund-Henningson) dual subgroup $\widetilde{G}$
of $G_{\widetilde{f}}=G_f^*$ defined above. One has the following relation between the pair
$(h_f,\alpha_f)$ for the polynomial $f$ and the pair $(h_{\widetilde{f}},\alpha_{\widetilde{f}})$
for $\widetilde{f}$.

\begin{proposition}\label{dual}
 Under the identification $G_f^*=G_{\widetilde{f}}$, one has $\alpha_{\widetilde{f}}=h_f$,
 $h_{\widetilde{f}}=\alpha_f$.
\end{proposition}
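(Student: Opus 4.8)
The plan is to make the canonical isomorphism $G_{\widetilde f}\cong G_f^*$ completely explicit in lattice terms and then to observe that $h_f$, $\alpha_f$, $h_{\widetilde f}$ and $\alpha_{\widetilde f}$ all arise from the single vector $\underline{1}=(1,\dots,1)$, so that Berglund--H\"ubsch transposition merely interchanges its two roles.

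First I would record the two symmetry groups lattice-theoretically. Writing $\underline{\lambda}=\mathbf{e}[\underline{t}]$ with $\underline{t}\in\QQ^n$, the condition $\prod_j\lambda_j^{E_{ij}}=1$ for all $i$ reads $E\underline{t}\in\ZZ^n$, so $G_f=\mathbf{e}[E^{-1}\ZZ^n]$ and $\underline{r}\mapsto\mathbf{e}[E^{-1}\underline{r}]$ induces a canonical isomorphism $\ZZ^n/E\ZZ^n\xrightarrow{\ \sim\ }G_f$. Since $\widetilde f$ has the transposed exponent matrix, one gets in the same way $\ZZ^n/E^T\ZZ^n\xrightarrow{\ \sim\ }G_{\widetilde f}$, $\underline{w}\mapsto\mathbf{e}[(E^T)^{-1}\underline{w}]$. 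The canonical pairing underlying $G_{\widetilde f}\cong G_f^*$ of \cite[Proposition 2]{BLMS} is then the nondegenerate pairing
\[
\ZZ^n/E\ZZ^n\times\ZZ^n/E^T\ZZ^n\to\CC^*,\qquad(\underline{r},\underline{w})\mapsto\mathbf{e}[\underline{w}^T E^{-1}\underline{r}],
\]
whose independence of the lifts I would verify against the lattices $E\ZZ^n$ and $E^T\ZZ^n$. In these coordinates $h_f=\mathbf{e}[E^{-1}\underline{1}]$ is represented by $\underline{r}=\underline{1}$, and the age character $\alpha_f\colon\mathbf{e}[E^{-1}\underline{r}]\mapsto\prod_j\lambda_j=\mathbf{e}[\underline{1}^T E^{-1}\underline{r}]$ is, by the displayed pairing, exactly the character of $G_f$ represented by $\underline{w}=\underline{1}$.

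With this in hand the two asserted equalities become one-line comparisons. The element of $G_{\widetilde f}$ represented by $\underline{w}=\underline{1}$ is $\mathbf{e}[(E^T)^{-1}\underline{1}]=h_{\widetilde f}$; since $\alpha_f\in G_f^*$ is represented by the same $\underline{w}=\underline{1}$, the identification $G_f^*=G_{\widetilde f}$ carries $h_{\widetilde f}$ to $\alpha_f$, which is the equality $h_{\widetilde f}=\alpha_f$. Interchanging the roles of $E$ and $E^T$ (equivalently, applying the previous computation to $\widetilde f$ and using $\widetilde{\widetilde f}=f$ together with the symmetry of the pairing) shows that $\alpha_{\widetilde f}$, viewed in $G_{\widetilde f}^*\cong G_f$, is represented by $\underline{r}=\underline{1}$, hence equals $\mathbf{e}[E^{-1}\underline{1}]=h_f$; this is $\alpha_{\widetilde f}=h_f$.

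The only genuine obstacle I anticipate is bookkeeping rather than mathematics: one must match the explicit pairing above with the canonical isomorphism of \cite[Proposition 2]{BLMS} exactly --- which group is paired via $E$ and which via $E^T$, and the distinction between ``an element'' and ``a character'' --- since a stray transpose would interchange the two asserted equalities. Once the normalization is fixed, the heart of the argument is the single matrix identity $\bigl((E^T)^{-1}\underline{1}\bigr)^T E=\underline{1}^T$, which expresses that the monodromy (exponential grading) operator and the age character are Berglund--H\"ubsch dual precisely because both are governed by the all-ones vector arising from $E\underline{q}=\underline{1}$.
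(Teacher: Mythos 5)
Your proposal is correct and takes essentially the same approach as the paper: the paper's proof uses the identical canonical pairing (written there as $(\underline{\lambda},\underline{\mu})={\mathbf{e}}[(r_1,\ldots,r_n)E(s_1,\ldots,s_n)^T]$, which matches your $\mathbf{e}[\underline{w}^T E^{-1}\underline{r}]$ after the substitutions $\underline{r}\mapsto (E^T)^{-1}\underline{w}$, $\underline{s}\mapsto E^{-1}\underline{r}$) and the same key identity $E\underline{q}=\underline{1}$, showing that $h_f$, viewed as a character of $G_{\widetilde{f}}$, sends $\underline{\lambda}$ to $\lambda_1\cdots\lambda_n=\alpha_{\widetilde{f}}(\underline{\lambda})$. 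Your lattice-coordinate bookkeeping via $\ZZ^n/E\ZZ^n$ and $\ZZ^n/E^T\ZZ^n$ is a harmless reformulation, and like the paper you dispose of the second equality by the $E\leftrightarrow E^T$ symmetry, which the paper leaves implicit.
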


\begin{proof}
 The canonical pairing between $G_{\widetilde{f}}$ and $G_f$ is defined by
 $$
 (\underline{\lambda},\underline{\mu})={\mathbf{e}}[(r_1,\ldots,r_n)E(s_1,\ldots, s_n)^T]\,,
 $$
 where $\underline{\lambda}=({\mathbf{e}}[r_1],\ldots,{\mathbf{e}}[r_n])\in G_{\widetilde{f}}$,
 $\underline{\mu}=({\mathbf{e}}[s_1],\ldots,{\mathbf{e}}[s_n])\in G_f$.
 The exponential grading operator $h_f=({\mathbf{e}}[q_1],\ldots,{\mathbf{e}}[q_n])\in G_f$ is defined
 by the equation $E(q_1,\ldots, q_n)^T=(1,\ldots, 1)^T$. Therefore, as a function on $G_{\widetilde{f}}$,
 it is equal to $(\underline{\lambda},h_f)={\mathbf{e}}[r_1+\ldots+r_n]=\lambda_1\cdot\ldots\cdot\lambda_n$,
 $\underline{\lambda}\in G_{\widetilde{f}}$, and thus coincides with $\alpha_{\widetilde{f}}$.
\end{proof}

Let $\widehat{\chi}^{G_f}(V_f,h_f)\in \widehat{\calB}(G_f)$ be the enhanced Euler characteristic
of the pair $(V_f,h_f)$. Since $h_f\in G_f$, one has $\widehat{\chi}^{G_f}(V_f,h_f)\in\widehat{B}_1(G_f)$.

The described situation is somewhat special. Namely, for an arbitrary finite enhanced $G_f$-set
$(X,h,\alpha)$ representing an element of $\widehat{B}_1(G_f)$, $\alpha_x$ is a representation of
the isotropy subgroup $(G_f)_x$ depending on $x$, $h$ can be considered as an element $h_x$ of
$G_f/(G_f)_x$ (also depending on $x$). Here we have a universal representation $\alpha_f$ of $G_f$
and also a distinguished element $h_f\in G_f$ so that $\alpha_x$ and $h_x$ are the corresponding
reductions of $\alpha_f$ and $h_f$ respectively. This permits to define the reduced versions
of the enhanced Euler characteristic and of the orbifold zeta function.

\begin{definition}
 The {\em reduced enhanced Euler characteristic} of $(V_f,h_f)$ is
 $$
 \overline{\widehat{\chi}}^{G_f}(V_f,h_f)=\widehat{\chi}^{G_f}(V_f,h_f)-[({\rm pt}, {\rm id}, \alpha_f)]\,.
 $$
\end{definition}

The reduced enhanced Euler characteristic of $(V_f,h_f)$ can be regarded (up to sign) 
as an enhanced equivariant Milnor number of the polynomial $f$.

Let $G$ be a subgroup of $G_f$.

\begin{definition} (cf.\ \cite[Definition 3]{PEMS})
 The {\em reduced orbifold zeta function} of the $G$-equivariant polynomial $f$ is
 $$
 \overline{\zeta}^{\rm orb}_{f,G}(t)=\zeta^{\rm orb}_{G,V_f,h_f}(t)\left/\prod_{g\in G}(1-t)_{\alpha_f(g)}\right.\,.
 $$
\end{definition}

The reduced orbifold zeta function $\overline{\zeta}^{\rm orb}_{f,G}(t)$ can be regarded
as the orbifold characteristic polynomial of the classical monodromy of $f$
(or the inverse to it depending on the number of variables).

\begin{remark}
 The reduced versions of the enhanced Euler characteristic and of the orbifold zeta function
 cannot be defined in the general setting since there is no distinguished one-dimensional
 representation.
\end{remark}

\begin{theorem}\label{theo2}
 One has
 $$
 \overline{\widehat{\chi}}^{G_{\widetilde{f}}}(V_{\widetilde{f}},h_{\widetilde{f}})=
 (-1)^n \widehat{D}_{G_f}\overline{\widehat{\chi}}^{G_f}(V_f,h_f)\,.
 $$
\end{theorem}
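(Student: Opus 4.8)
The plan is to compute both reduced enhanced Euler characteristics explicitly as integer combinations of the basic classes $[\widehat X_{H,1,\ldots}]$, to match these combinations term by term under $\widehat D_{G_f}$ using Proposition~\ref{dual}, and thereby to reduce the whole statement to a comparison of Euler characteristics of the isotropy strata of the two Milnor fibres. First I would observe that $h_f$ is of finite order (it lies in the finite group $G_f$) and that, since $h_f\in G_f$, one has $h_f x\in G_f x$ for every $x\in V_f$; hence in formula (\ref{chi_finite_order}) only strata with $k=1$ occur, and the displacement $\overline h$ is forced to be the class $[h_f]\in G_f/H$. I would also check that the age representation on the Milnor fibre is the restriction of the universal one: for $x\in V_f$ and $g\in (G_f)_x$ the covector $df_x$ is $g$-invariant (because $f\circ g=f$ and $gx=x$), so $g$ acts trivially on the normal line $\CC^n/T_xV_f$; consequently the product of the eigenvalues of $g$ on $T_xV_f$ equals $\alpha_f(g)$, which gives $\alpha_x(g)=\mathbf e[{\rm age}_x(g)]=\alpha_f(g)$. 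Thus $\overline\alpha=\alpha_f|_H$ is determined by $H$, and
\[
\widehat\chi^{G_f}(V_f,h_f)=\sum_{H\subseteq G_f}\chi\bigl(\{x\in V_f:(G_f)_x=H\}/G_f\bigr)\,[\widehat X_{H,1,[h_f],\alpha_f|_H}],
\]
with the analogous expansion for $(V_{\widetilde f},h_{\widetilde f})$.

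Next I would apply $\widehat D_{G_f}$ to this expansion and verify that the data of each dual basic class coincide, slot for slot, with those occurring for $\widetilde f$. By definition $\widehat D_{G_f}[\widehat X_{H,1,[h_f],\alpha_f|_H}]=[\widehat X_{\widetilde H,1,\widetilde{\alpha_f|_H},\widetilde{[h_f]}}]$. Using the canonical identifications $G_f/H\cong\widetilde H^*$ and $H^*\cong G_f^*/\widetilde H$ together with Proposition~\ref{dual} (under which $h_{\widetilde f}=\alpha_f$ and $\alpha_{\widetilde f}=h_f$), I would show that the displacement $\widetilde{\alpha_f|_H}$ is exactly the class $[\alpha_f]=[h_{\widetilde f}]$ in $G_{\widetilde f}/\widetilde H$, and that the dual representation $\widetilde{[h_f]}$ is exactly $h_f|_{\widetilde H}=\alpha_{\widetilde f}|_{\widetilde H}$. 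Hence $\widehat D_{G_f}[\widehat X_{H,1,[h_f],\alpha_f|_H}]=[\widehat X_{\widetilde H,1,[h_{\widetilde f}],\alpha_{\widetilde f}|_{\widetilde H}}]$, which is precisely the basic class carried by the isotropy type $\widetilde H$ in the expansion of $\widehat\chi^{G_{\widetilde f}}(V_{\widetilde f},h_{\widetilde f})$. This matching of data reduces Theorem~\ref{theo2} to the single coefficient identity
\[
\chi\bigl(\{y\in V_{\widetilde f}:(G_{\widetilde f})_y=\widetilde H\}/G_{\widetilde f}\bigr)=(-1)^n\,\chi\bigl(\{x\in V_f:(G_f)_x=H\}/G_f\bigr)
\]
for every subgroup $H\subseteq G_f$, after accounting for the two reduction terms.

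The coefficient identity is the heart of the matter and, I expect, the main obstacle. It is precisely the statement that the equivariant Euler characteristics $\chi^{G_f}(V_f)\in B(G_f)$ and $\chi^{G_{\widetilde f}}(V_{\widetilde f})\in B(G_{\widetilde f})$ correspond under the Burnside-ring duality $[G_f/H]\mapsto[G_f^*/\widetilde H]$ up to the sign $(-1)^n$; it is the isotropy-refined, equivariant enhancement of the coincidence of (reduced) orbifold Euler characteristics established in \cite{MMJ}. To prove it I would use the structure theory of invertible polynomials: after a permutation of variables $f$ decomposes into Fermat, chain and loop atomic summands, the Milnor fibre and its $G_f$-action decompose multiplicatively according to this decomposition, and the Berglund--H\"ubsch transpose acts compatibly by dualizing the corresponding blocks. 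An induction on the number $n$ of variables over these atomic building blocks, in the spirit of \cite{MMJ} and the equivariant Saito duality of \cite{BLMS}, yields the identity, the sign $(-1)^n$ appearing exactly as in the coincidence up to sign of \cite{MMJ}.

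Finally I would verify that the two reduction terms match under $(-1)^n\widehat D_{G_f}$. Since $\widehat D_{G_f}$ exchanges the extreme isotropy types $\{e\}\leftrightarrow G_f$, the subtracted class $[(\mathrm{pt},\mathrm{id},\alpha_f)]=[\widehat X_{G_f,1,e,\alpha_f}]$ on the $f$-side and the subtracted class $[(\mathrm{pt},\mathrm{id},\alpha_{\widetilde f})]$ on the $\widetilde f$-side live at opposite ends of the isotropy lattice; a direct computation at these two extreme strata shows that passing to the reduced Euler characteristics supplies exactly the $\pm1$ corrections needed for the coefficient identity to hold in the remaining cases, completing the proof.
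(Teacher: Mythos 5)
Your set-up is sound and runs parallel to the paper: since $h_f\in G_f$, formula (\ref{chi_finite_order}) indeed produces only classes with $k=1$ and displacement $[h_f]$; your normal-line argument that a $g\in (G_f)_x$ fixes $df_x$ and hence ${\mathbf{e}}[{\rm age}_x(g)]=\det\bigl(g\vert_{T_xV_f}\bigr)=\alpha_f(g)$ is correct, and is exactly what entitles the paper to equip every stratum with the universal pair $(h_f,\alpha_f)$; and the slot-for-slot identification $\widehat{D}_{G_f}[\widehat{X}_{H,1,[h_f],\alpha_f\vert_H}]=[\widehat{X}_{\widetilde{H},1,[h_{\widetilde{f}}],\alpha_{\widetilde{f}}\vert_{\widetilde{H}}}]$ is precisely the paper's application of Proposition~\ref{dual}. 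The first real problem is your displayed ``coefficient identity'': as written it is false. Take $f=x^a$ ($n=1$) and $H=\{1\}$, so $\widetilde{H}=G_{\widetilde{f}}$: the left-hand side is $\chi(\emptyset)=0$ while the right-hand side is $-1$. The corrections from the reduction terms are not a symmetric afterthought: the subtracted class $[({\rm pt},{\rm id},\alpha_f)]$ does \emph{not} pair with the subtracted class on the $\widetilde{f}$-side; as in item 3) of the list the paper quotes from \cite{BLMS}, it is dual (up to $(-1)^n$) to the free-orbit contribution of the full-torus stratum $\chi_{\widetilde{f}}^{I_0}$, so the Kronecker corrections $-\delta_{H,G_f}$ and $-\delta_{\widetilde{H},G_{\widetilde{f}}}$ enter the identity asymmetrically, at $H=G_f$ on one side and at $H=\{1\}$ on the other. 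Your final paragraph asserts that these corrections work out but contains no computation, and the $n=1$ example shows they are exactly where the content sits.

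The decisive gap, however, is that the coefficient identity --- which you rightly call the heart of the matter --- is left unproven, and the route you sketch for it fails at its first step. For a Sebastiani--Thom sum $f=f_1(x')+f_2(x'')$ the Milnor fibre $\{f_1+f_2=1\}$ is a join-type object, not the product $V_{f_1}\times V_{f_2}$, so the claim that ``the Milnor fibre and its $G_f$-action decompose multiplicatively'' over the Fermat/chain/loop summands is false, and an induction built on it has no base. What does behave well is the partition $V_f=\coprod_{I\subset I_0}V_f\cap(\CC^*)^I$ into coordinate tori: on each torus the isotropy subgroup is the constant group $G_f^I=\{\underline{\lambda}\in G_f:\lambda_i=1\ \text{for}\ i\in I\}$, one has $\widetilde{G_f^I}=G_{\widetilde{f}}^{\overline{I}}$ for $\overline{I}=I_0\setminus I$, and the needed stratum-wise statements (vanishing of $\chi_f^I$ and $\chi_{\widetilde{f}}^{\overline{I}}$ when $\vert{\rm supp\,}f\cap\ZZ^I\vert\neq\vert I\vert$; duality of $\chi_f^I$ and $\chi_{\widetilde{f}}^{\overline{I}}$ up to $(-1)^n$ for proper $I$; duality of $\chi_f^{I_0}$ with $-1$) are exactly the content of the proof of Theorem~1 of \cite{BLMS}, which the paper simply cites; your isotropy-stratified identity then follows by summing over $\{I:G_f^I=H\}$. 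In other words, the theorem is by design a reduction of the enhanced statement to the non-enhanced equivariant Saito duality of \cite{BLMS} together with Proposition~\ref{dual}; the honest repair of your proposal is to invoke that theorem in its torus-stratified form, rather than to re-derive it by an induction that, as written, is both unexecuted and founded on a false decomposition.
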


\begin{proof}
 For a subset $I\subset I_0=\{1, 2, \ldots, n\}$, let 
 $$
 (\CC^*)^I:= \{(x_1, \ldots, x_n)\in \CC^n: x_i\ne 0 {\rm \ for\ }i\in I, x_i=0 {\rm \ for\ }i\notin I\}
 $$
 be the corresponding coordinate torus.
 Let $\ZZ^n$ be the lattice of monomials in the variables $x_1$, \dots, $x_n$ (an $n$-tuple 
 $(k_1, \ldots, k_n)\in \ZZ^n$ corresponds to the monomial $x_1^{k_1}\cdot\ldots\cdot x_n^{k_n}$),
 let $\ZZ^I:=\{(k_1, \ldots, k_n)\in \ZZ^n: k_i=0 \mbox{ for }i\notin I\}$ and
 let $\mbox{supp\,} f\subset \ZZ^n$ be the set of monomials (with non-zero coefficients) in the
 polynomial $f$.
 
 The monodromy transformation $h_f$ respects the partitioning of $\CC^n$ into the tori $(\CC^*)^I$
 and therefore one has
 \begin{equation}\label{partitioning}
  \overline{\widehat{\chi}}^{G_f}(V_f,h_f)=\sum_{I\subset I_0}\chi_I^{G_f}(V_f, h_f)-
  [({\rm pt}, {\rm id}, \alpha_f)]\,,
 \end{equation}
 where $\chi_I^{G_f}(V_f, h_f)$ is the element of $\widehat{B}(G_f)$ represented by the triple
 $(\chi_I, h_f, \alpha_f)$, where $\chi_I$ is a representative of the equivariant Euler
 characteristic $\chi^{G_f}(V_f\cap(\CC^*)^I)\in B(G)$. In the proof of Theorem~1 in \cite{BLMS}
 it is shown that:
 \begin{enumerate}
  \item[1)] if $\vert{\rm supp\,} f \cap \ZZ^I\vert\neq\vert I\vert$, then
 $\vert{\rm supp\,} \widetilde{f} \cap \ZZ^{\overline{I}}\vert\neq\vert {\overline{I}}\vert$
 ($\overline{I}=I_0\setminus I$) and both $\chi_f^I$ and $\chi_{\widetilde{f}}^{\overline{I}}$
 are equal to zero;
 \item[2)] if $I$ is a proper subset of $I_0$ and $\vert{\rm supp\,} f \cap \ZZ^I\vert=\vert I\vert$,
 then $\vert{\rm supp\,} \widetilde{f} \cap \ZZ^{\overline{I}}\vert=\vert {\overline{I}}\vert$
 and the elements $\chi_f^I\in B(G_f)$ and $\chi_{\widetilde{f}}^{\overline{I}}\in B(G_{\widetilde{f}})$
 are (up to the sign $(-1)^n$) dual to each other in the sense of the equivariant (non-enhanced)
 Saito duality $D_{G_f}$;
 \item[3)] the element $\chi_f^{I_0}\in B(G_f)$ is (up to the sign $(-1)^n$) Saito dual to
 $(-1)\in B(G_{\widetilde{f}})$.
  \end{enumerate}
 Together with Proposition~\ref{dual} this implies the enhanced Saito duality (up to sign)
 between the summands in (\ref{partitioning}) and the corresponding summands in the same
 equation for $\widetilde{f}$.
\end{proof}

Theorems~\ref{theo1} and \ref{theo2} imply \cite[Theorem 1]{PEMS}.

\begin{corollary}
 One has 
 $$
 \overline{\zeta}^{\rm orb}_{\widetilde{f},G}(t)=\left(\overline{\zeta}^{\rm orb}_{f,G}(t)\right)^{(-1)^n}\,.
 $$
\end{corollary}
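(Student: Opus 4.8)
The plan is to rewrite both reduced orbifold zeta functions as orbifold zeta functions of the reductions of the reduced enhanced Euler characteristics, and then to substitute Theorem~\ref{theo2} into Theorem~\ref{theo1}. Throughout, $\widetilde G\subset G_{\widetilde f}=G_f^*$ denotes the dual subgroup of $G\subset G_f$, so that the left-hand side is $\overline{\zeta}^{\rm orb}_{\widetilde f,\widetilde G}(t)$, the zeta function of the dual pair $(\widetilde f,\widetilde G)$ of $(f,G)$.

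The main preliminary step is the identity
$$
\overline{\zeta}^{\rm orb}_{f,G}(t)=\zeta^{\rm orb}_{R^{G_f}_{G}\,\overline{\widehat{\chi}}^{G_f}(V_f,h_f)}(t)\,.
$$
To get it I would first invoke the proposition comparing $\zeta^{\rm orb}_{G,V_f,h_f}$ with the orbifold zeta function of the enhanced Euler characteristic, which gives $\zeta^{\rm orb}_{G,V_f,h_f}(t)=\zeta^{\rm orb}_{\widehat{\chi}^G(V_f,h_f)}(t)$, together with the compatibility $\widehat{\chi}^G(V_f,h_f)=R^{G_f}_G\widehat{\chi}^{G_f}(V_f,h_f)$ of the enhanced Euler characteristic with the reduction homomorphism. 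Then I would identify the normalizing denominator: the class $R^{G_f}_G[({\rm pt},{\rm id},\alpha_f)]$ is the basic element $\widehat{X}^G_{G,1,{\rm id},\alpha_f|_G}$, and Lemma~\ref{orbifold_basic} (with $H=G$, $k=1$) evaluates its orbifold zeta function to $\bigl(1-t^{m}\bigr)^{|G|/m}$ with $m=|G|/|{\rm Ker}\,\alpha_f\cap G|$; a direct count of the values $\alpha_f(g)^{-1}$, $g\in G$, shows this equals $\prod_{g\in G}(1-t)_{\alpha_f(g)}$. Since $R^{G_f}_G$ is a homomorphism and $\zeta^{\rm orb}_{\bullet}$ carries disjoint union to multiplication, dividing by this denominator turns the unreduced identity into the reduced one. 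The same identity holds verbatim for $\widetilde f$ with $G$ replaced by $\widetilde G$.

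The remainder is formal bookkeeping. By Theorem~\ref{theo2},
$$
\overline{\widehat{\chi}}^{G_{\widetilde f}}(V_{\widetilde f},h_{\widetilde f})=(-1)^n\,\widehat{D}_{G_f}\,\overline{\widehat{\chi}}^{G_f}(V_f,h_f)\,,
$$
and since $h_f\in G_f$ forces every summand of (\ref{chi_finite_order}) to have $k=1$, the class $\overline{\widehat{\chi}}^{G_f}(V_f,h_f)$ lies in $\widehat{B}_1(G_f)$, so Theorem~\ref{theo1} applies to it. Applying the preliminary identity for $\widetilde f$, then Theorem~\ref{theo2}, then the fact that scaling a virtual class by $(-1)^n$ raises its orbifold zeta function to the power $(-1)^n$, and finally Theorem~\ref{theo1} with $\calG=G_f$, I obtain
$$
\overline{\zeta}^{\rm orb}_{\widetilde f,\widetilde G}(t)=\left(\zeta^{\rm orb}_{R^{G_f^*}_{\widetilde G}\widehat{D}_{G_f}\overline{\widehat{\chi}}^{G_f}(V_f,h_f)}(t)\right)^{(-1)^n}=\left(\zeta^{\rm orb}_{R^{G_f}_{G}\overline{\widehat{\chi}}^{G_f}(V_f,h_f)}(t)\right)^{(-1)^n}=\left(\overline{\zeta}^{\rm orb}_{f,G}(t)\right)^{(-1)^n}\,,
$$
which is the assertion.

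I expect the only genuine obstacle to be the compatibility $\widehat{\chi}^G=R^{G_f}_G\widehat{\chi}^{G_f}$ used above: everything else is an application of Lemma~\ref{orbifold_basic} and the two theorems. To establish it I would argue by the uniqueness in Proposition~\ref{prop1}, checking that $R^{G_f}_G\widehat{\chi}^{G_f}(V_f,h_f)$ satisfies the defining equations (\ref{equdef}) for the group $G$; this reduces to the functoriality of the equipped L\"uck--Rosenberg Lefschetz numbers under restriction $C_{G_f}(g)\to C_G(g)$ (here $C_G(g)=G$ and $C_{G_f}(g)=G_f$, since the groups are abelian), the fixed point sets of $g h_f^m$ being intrinsic. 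A secondary point, already noted, is that $\overline{\widehat{\chi}}^{G_f}(V_f,h_f)$ really lies in the uncompleted group $\widehat{B}_1(G_f)$, so that Theorem~\ref{theo1} is literally applicable.
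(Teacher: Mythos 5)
Your proof is correct and is essentially the paper's intended argument: the paper derives the corollary by precisely this combination of the Proposition ($\zeta^{\rm orb}_{G,V,\varphi}=\zeta^{\rm orb}_{\widehat{\chi}^G(V,\varphi)}$), the reduction homomorphism, and Theorems~1 and~2, merely leaving the bookkeeping implicit. The details you supply are accurate — the compatibility $\widehat{\chi}^G(V_f,h_f)=R^{G_f}_G\widehat{\chi}^{G_f}(V_f,h_f)$ via the uniqueness in Proposition~1, the identification of $\prod_{g\in G}(1-t)_{\alpha_f(g)}$ with $\zeta^{\rm orb}$ of $R^{G_f}_G[({\rm pt},{\rm id},\alpha_f)]$ via Lemma~1, and the reading of the left-hand side as $\overline{\zeta}^{\rm orb}_{\widetilde{f},\widetilde{G}}$ (correcting the paper's notational slip).
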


\bigskip
\noindent Leibniz Universit\"{a}t Hannover, Institut f\"{u}r Algebraische Geometrie,\\
Postfach 6009, D-30060 Hannover, Germany \\
E-mail: ebeling@math.uni-hannover.de\\

\medskip
\noindent Moscow State University, Faculty of Mechanics and Mathematics,\\
Moscow, GSP-1, 119991, Russia\\
E-mail: sabir@mccme.ru


\begin{thebibliography}{10}

\bibitem{Ar} V.~I.~Arnold: Critical points of smooth functions and
their normal forms. Usp. Math. Nauk. {\bf 30} (1975), no.5, 3--65 (Engl.
translation in Russ. Math. Surv. {\bf 30} (1975), no.5, 1--75).

\bibitem{BH1} P.~Berglund, T.~H\"ubsch: A generalized construction of
mirror manifolds. Nuclear Physics B {\bf 393} (1993), 377--391.

\bibitem{BH2} P.~Berglund, M.~Henningson: Landau-Ginzburg orbifolds,
mirror symmetry and the elliptic genus. Nuclear Physics B {\bf 433}
(1995), 311--332.

\bibitem{RMC} 
A.~Campillo, F.~Delgado, S.M.~Gusein-Zade:
An equivariant Poincar\'e series of filtrations and monodromy zeta functions.
Revista Matematica Complutense {\bf 28} (2015), no.2, 449--467.

\bibitem{DHVW1} L.~Dixon, J.~A.~Harvey, C.~Vafa, E.~Witten: Strings on orbifolds.
Nuclear Physics B {\bf 261} (1985), 678--686. 

\bibitem{DHVW2} L.~Dixon, J.~A.~Harvey, C.~Vafa, E.~Witten: Strings on orbifolds II.
Nuclear Physics B {\bf 274} (1986), 285--314. 

\bibitem{MMJ} W.~Ebeling, S.~M.~Gusein-Zade:
Orbifold Euler characteristics for dual invertible polynomials.
Mosc. Math. J. {\bf 12} (2012), no.1, 49--54.

\bibitem{BLMS} W.~Ebeling, S.~M.~Gusein-Zade:
Saito duality between Burnside rings for invertible polynomials.
Bull. Lond. Math. Soc. {\bf 44} (2012), no.4, 814--822. 

\bibitem{PEMS} W.~Ebeling, S.~M.~Gusein-Zade:
Orbifold zeta functions for dual invertible polynomials. 
arXiv: 1407.0154, Proc. Edinb. Math. Soc. (to appear)

\bibitem{ET} W.~Ebeling, A.~Takahashi: Mirror symmetry between
orbifold curves and cusp singularities with group action. Int. Math. Res. Not. {\bf 2013},
2240--2270 (2013).

\bibitem{GZ-FAA} S.~M.~Gusein-Zade: On an equivariant analogue of the monodromy zeta function.
(Russian) Funktsional. Anal. i Prilozhen. 47 (2013), no. 1, 17--25;
translation in Funct. Anal. Appl. 47 (2013), no. 1, 14--20.

\bibitem{Ito-Reid}  Y.~Ito, M.~Reid:
The McKay correspondence for finite subgroups of ${\rm SL}(3,\CC)$.
In: Higher-dimensional complex varieties (Trento, 1994), 221--240, de Gruyter, Berlin, 1996. 

\bibitem{Knutson} D.~Knutson: $\lambda$-rings and the representation theory of the symmetric group.
Lecture Notes in Mathematics, Vol.~{\bf 308}, Springer-Verlag, Berlin-New York, 1973.

\bibitem{Luck} W.~L\"uck, J.~Rosenberg: The equivariant Lefschetz fixed point theorem
for proper cocompact $G$-manifolds. In: High-dimensional manifold topology, 322--361,
World Sci. Publ., River Edge, NJ, 2003.

\bibitem{Saito1} K.~Saito: Duality for regular systems of weights: a
pr\'{e}cis. In: Topological Field Theory, Primitive Forms and Related
Topics
(M.~Kashiwara, A.~Matsuo, K.~Saito, I.~Satake, eds.), Progress in Math.,
Vol.~{\bf 160}, Birkh\"auser, Boston, Basel, Berlin, 1998, pp.~379--426.

\bibitem{Saito2} K.~Saito: Duality for regular systems of weights. Asian J.
Math. {\bf 2} (1998), no.4, 983--1047.

\bibitem{TtD} T.~tom Dieck: Transformation groups and representation theory.
Lecture Notes in Mathematics, 766, Springer, Berlin, 1979.

\bibitem{Verdier} J.-L.~Verdier: Caract\'eristique d'Euler-Poincar\'e.
Bull. Soc. Math. France {\bf 101}, 441--445 (1973).

\bibitem{Zaslow}  E.~Zaslow: Topological orbifold models and quantum cohomology rings.
Comm. Math. Phys. {\bf 156} (1993), no.2, 301--331.

\end{thebibliography}
\end{document}